\documentclass[12pt, reqno]{amsart}
\usepackage{amsmath, amsthm, amscd, amsfonts, amssymb, graphicx, color}
\usepackage{mathrsfs}
\usepackage{enumerate}
\usepackage[bookmarksnumbered, colorlinks, plainpages]{hyperref}
\hypersetup{colorlinks=true,linkcolor=red, anchorcolor=green, citecolor=cyan, urlcolor=red, filecolor=magenta, pdftoolbar=true}

\newtheorem{theorem}{Theorem}[section]
\newtheorem{lemma}[theorem]{Lemma}

\newtheorem{cor}[theorem]{Corollary}

\newtheorem{example}[theorem]{Example}

\theoremstyle{remark}
\newtheorem{remark}[theorem]{\bf{Remark}}
\numberwithin{equation}{section}
\allowdisplaybreaks
\begin{document}

\title [Improved Bounds for  numerical radius  and {\large$\lowercase{a}$-}numerical radius  \ldots ]{Improved Bounds for  numerical radius  and {\Large$\lowercase{a}$-}numerical radius   in ${C}^*$-algebras }
	\author[ S. Mahapatra, A. Ghosh,  and  R. Birbonshi ]{ Saikat Mahapatra, Arobinda Ghosh  and  Riddhick Birbonshi}

\address[Mahapatra]{Department of Mathematics, Jadavpur University, Kolkata 700032, West Bengal, India}
\email{smpatra.lal2@gmail.com}

    \address[Ghosh]{Department of Mathematics, Jadavpur University, Kolkata 700032, West Bengal, India}
\email{7aghosh@gmail.com}

\address[Birbonshi] {Department of Mathematics, Jadavpur University, Kolkata 700032, West Bengal, India}
\email{riddhick.math@gmail.com}

\subjclass[2020]{46L05, 47A30, 47A12.}

\keywords{Cauchy-Schwarz inequality, $a$-numerical radius, Numerical radius, Inequality, Moore-Penrose inverse, ${C}^*$-algebra }

\maketitle	

\begin{abstract} 
		In this article,  we derive several significant upper bounds for the numerical radius and $a$-numerical radius of an element in a ${C}^*$-algebra by improving inequalities for positive linear functionals. Our findings refine and generalize the existing inequalities. Furthermore, we introduce a new notion to derive improved upper bounds of the numerical radius for an element in a ${C}^*$-algebra using the Moore-Penrose inverse.

\end{abstract}

\section{Introduction}
Let $\mathfrak{A} $ be a complex unital  ${C}^*$-algebra, where the unit is denoted by  $\textbf{1}_{\mathfrak{A}}$ and the dual space of $\mathfrak{A}$ is represented by $\mathfrak{A}^*$. 
A linear functional $\phi\in \mathfrak{A}^*$ is called  positive and denoted by $\phi\ge 0$ if $ \phi(x^*x)\ge 0$ for all $x\in \mathfrak{A}$.  Any such positive linear functional also satisfies the relation $\phi(x^*)=\overline{\phi(x)}$ for any $x\in \mathfrak{A}$. Moreover,
if $\phi$ is a positive linear functional on $\mathfrak{A} $ then for all $x,y\in \mathfrak{A}$, $|\phi(x^*y)|^2\le \phi(x^*x)\phi(y^*y)$, which is called Cauchy-Schwarz inequality on a ${C}^*$-algebra (see \cite{book1,putman}). A positive linear functional on $\mathfrak{A} $ is called  state if $\phi(\textbf{1}_{\mathfrak{A}})=1$. The collection of all states on $\mathfrak{A} $  is denoted by 
$\mathcal{S}(\mathfrak{A})=\{\phi\in\mathfrak{A}^* : \phi\ge 0 \,\, \mbox{and}\,\, \phi(\textbf{1}_{\mathfrak{A}})=1 \}$. For an element $x\in \mathfrak{A} $, the algebraic numerical range  and the algebraic numerical radius are, respectively, given by \begin{align*}
V(x)=\{\phi(x):\phi\in \mathcal{S}\mathfrak{(A)}\}\,\,\mbox{ and} \,\, v(x)=\sup\{|\lambda|: \lambda\in V(x)\}.   
\end{align*}

Let $\mathscr{B}(\mathcal{H})$ be the unital ${C}^*$-algebra consisting of all bounded linear operators on a complex separable Hilbert space $\big( \mathcal{H},\langle \cdot \rangle \big)$. In particular, when $\mathfrak{A}=\mathscr{B}(\mathcal{H})$   and $T\in \mathscr{B}(\mathcal{H})$, the spatial numerical range of $T$ is defined as $W(T)=\big\{ \big \langle Tx,x \big\rangle: x\in \mathcal{H},\|x\|=1\big\}$, and its spatial numerical radius is defined as $w(T)=\sup\{|\lambda|: \lambda\in W(T)\}$. The algebraic numerical range $V(T)$ is the closure of the spatial numerical range  $W(T)$.

For any $x\in \mathfrak{A}$, the fundamental inequality between  ${C}^*$-norm and the numerical radius $v(x)$   is as follows (see \cite{bonsall}):
\begin{eqnarray*}
    \frac{1}{2}\|x\|\le v(x) \le \|x\|. \label{in1}
\end{eqnarray*}
Throughout this paper, we assume $a$ is a non-zero positive element in $\mathfrak{A}$. Now, we
define  $\mathcal{S}_{a}\mathfrak{(A)}=\{\varphi\in \mathfrak{A}^{*}:\varphi\ge0,\hspace{0.07 cm}\varphi(a)=1\}$. Alternatively, this collection can be described as
$ \mathcal{S}_{a}\mathfrak{(A)}= \big\{\frac{\phi}{\phi(a)}:\phi\in \mathcal{S}(\mathfrak{A}),\hspace{0.07 cm} \phi(a)\neq 0\big\}$. Recently, Mabrouk and Bourhim  have introduced and studied the concepts of the 
$a$-numerical range and 
$a$-numerical radius for elements in unital 
 ${C}^*$-algebras (see \cite{bourhim2021numerical}).
 The  $a$-numerical range of  an element $x\in \mathfrak{A} $  is 
$V_{a}(x)=\{\varphi(ax):\varphi\in \mathcal{S}_{a}\mathfrak{(A)}\},\hspace{0.15 cm}
  $ and the $a$-numerical radius is defined by $ v_{a}(x)=\sup\{|z|:z\in V_{a}(x)\}. $
Specifically, for $a=\textbf{1}_{\mathfrak{A}}$, the $a$-numerical range and $a$-numerical radius of $x$ become algebraic numerical range and algebraic numerical radius of $x$, respectively; i.e., $V_a(x)=V(x)$ and 
$v_a(x)=v(x)$. These concepts have been introduced in  \cite{bourhim2021numerical} which generalize the concepts of the spatial  $A$-numerical range and spatial $A$-numerical radius of a bounded linear operator $T\in \mathscr{B}(\mathcal{H})$, which are given by  $W_A(T)=\big\{ \big \langle Tx,x \big\rangle_A: x\in \mathcal{H},\|x\|_A=1\big\}$ and $w_A(T)=\sup \{|z|: z\in W_A(T)\}$, respectively. Here $A$ is a positive bounded linear operator  on a Hilbert space $(\mathcal{H},\langle \cdot\rangle)$ and the $ A $-semi norm of any $x\in \mathcal{H}$  is defined by $\|x\|_A^2=\langle x,x \rangle_A=\langle Ax,x\rangle$. For an element $x\in \mathfrak{A}$, the $a$-Crawford number of $x$, denoted by $c_{a}(x)$, is defined as follows:
\[c_{a}(x)=\inf\{|z|:z\in V_a(x)\}.\] 
In particular, when $a=\textbf{1}_{\mathfrak{A}}$, this reduces to the usual Crawford number of $x$, written as $c(x)$, (see \cite{bonsall,azamanich}).

 For any element $x\in \mathfrak{A}$, define $\|x\|_{a}=\sup \{\sqrt{\varphi(x^{*}ax}):\varphi\in \mathcal{S}_{a}\mathfrak{(A)}\}$.
 The value of  $\|x\|_{a} $ equals zero if and only if $ax=0 $.  Moreover, for some $x\in \mathfrak{A}$, 
$\|x\|_a$ may be infinite (see \cite{bourhim2021numerical}).
Let $\mathfrak{A}^a$ be the collection of  such elements $x\in \mathfrak{A}$ for which $\|x\|_a< \infty.$ The function $\|\cdot\|_a$ defines a semi-norm on $\mathfrak{A}^a$ and also satisfies the condition  $  \|xy\|_a\le\|x\|_a\|y\|_a$   for all $x,y\in \mathfrak{A}^a$. Thus $\mathfrak{A}^a$
 forms a subalgebra of $\mathfrak{A}$. 
For any $x\in \mathfrak{A}$, an element $x^{\#_a}\in \mathfrak{A}$ is called  $a$-adjoint of $x$ if it satisfies the relation $x^*a=ax^{\#_a}.$ An
$a$-adjoint for a given 
$x\in \mathfrak{A}$ may not always exist or be unique.
The set of all elements in  $\mathfrak{A}$  that possess $a$-adjoints is denoted by $\mathfrak{A}_a$, i.e., 
\[\mathfrak{A}_a=\{ x\in \mathfrak{A}:\mbox{there is $x^{\#_a}\in \mathfrak{A}$ such  that }  x^*a=ax^{\#_a} \}. \]
Moreover, $\mathfrak{A}_a$ forms a subalgebra of $\mathfrak{A}$, and it is also contained in   $\mathfrak{A}^a$.
If $x\in \mathfrak{A}_a$ and $x^{\#_a}$ is an $a$-adjoint of $x$, then by \cite[Corollary 4.9]{bourhim2021numerical}\begin{equation*}\|x\|_a^2 =\|x^{\#_a}\|_a^2=\|x^{\#_a}x\|_a=\|xx^{\#_a}\|_a. \end{equation*}
An element $x \in \mathfrak{A}$ is called $a$-self- adjoint if $ax$ is self-adjoint, i.e.,  $ax=x^*a$. If $x\in \mathfrak{A}$, then for any natural number $n$, it follows that  $v_a(x^n) \le v_a^n(x)$. 
In \cite{bourhim2021numerical}, Bourhim and Mabrouk have shown that for any $x\in \mathfrak{A}_a$, the inequality between the semi-norm  $\|\cdot \|_a$ and $a$-numerical radius $v_a(x)$ holds as below:
\begin{eqnarray}
    \frac{1}{2}\|x\|_a\le v_a(x) \le \|x\|_a. \label{in2}
\end{eqnarray} The left-hand inequality becomes equality if $ax\neq 0$ and $ax^2=0$, and the right-hand inequality becomes equality if $x$ is $a$-self-adjoint.

In \cite{mabrouk},  Zamani and Mabrouk  have also provided the following upper bound for the $a$-numerical radius of $x\in \mathfrak{A}_a$:
\begin{eqnarray}
v_a^2(x)&\le& \frac{1}{2}\|x^{\#_a}x+xx^{\#_a}\|_a. \label{in33}  
\end{eqnarray}
Clearly, the inequality in (\ref{in33})  provides a sharper bound than the right-hand side inequality in (\ref{in2}).
Over the years,   $a$-numerical range and numerical range of elements in unital ${C}^*$-algebras have been studied by many mathematicians,  and we refer to   \cite{mabrouk2,pintu,bourhim2021numerical,bourhim2,chen,chen2,jana,mabrouk,smpatra,azamanich,newzamani}.

An element $x\in \mathfrak{A}$ is called regular if it has a generalized inverse in  $\mathfrak{A}$,   i.e., if there exists $x^{\prime} \in \mathfrak{A}$ for which $x = x x' x$.  It is easy to verify that the elements $xx^{\prime}$ and $x^{\prime}x$ are idempotents is $\mathfrak{A}$.  A generalized inverse $x^{\prime}$ of a regular element $x\in \mathfrak{A}$ is said to be normalized if it satisfies both $x=xx^{\prime}x$ and $x^{\prime}=x^{\prime}xx^{\prime}$. In the presence of an involution $*:\mathfrak{A}\to \mathfrak{A}$ it is possible to enquire if $xx^{\prime}$ and $x^{\prime}x$ are self-adjoint, equivalently whether or not $(xx^{\prime})^*=xx^{\prime}$ and $(x^{\prime}x)^*=x^{\prime}x$. In this case, $x^{\prime}$ is called the Moore-Penrose inverse of $x$, and it is denoted by $x^{\dag}$.

In \cite{robin}, it was proved that every regular element 
$x$ in a 
${C}^*$-algebra 
 $\mathfrak{A}$ possesses a Moore-Penrose inverse, and this Moore-Penrose inverse is unique. Thus, for any regular element 
$x\in \mathfrak{A}$, the Moore-Penrose inverse, denoted by 
$x^{\dag}$, is the unique element in 
$\mathfrak{A}$ that satisfies the following equations:
\begin{align*}
(i)~~ x^{\dag} = x^{\dag} x x^{\dag},\
(ii)~~ x = x x^{\dag} x,\
(iii)~~ (x^{\dag} x)^* = x^{\dag} x,\
(iv)~~ (x x^{\dag})^* = x x^{\dag}.
\end{align*}
According to the uniqueness of the Moore-Penrose inverse of a regular element $x$, $x^*$ also has a Moore-Penrose inverse and $(x^*)^{\dag}=(x^{\dag})^*$. Furthermore, if $x$ is a regular element, then $x^{\dag}$  is also  regular and $(x^{\dag)^{\dag}}=x$. For additional information regarding the Moore-Penrose inverse of a regular element $x\in\mathfrak{A}$, we refer to \cite{Boasso,robin2,robin}.

The purpose of this paper is to derive new upper bounds for $a$-numerical radius and numerical radius of  elements in a unital
${C}^*$-algebra. In Section \ref{sec2}, we review some well-known inequalities and derive new ones for positive linear functionals that are crucial in our subsequent proofs. Section \ref{sec3} is devoted to establish several new upper bounds for $a$-numerical radius inequalities for elements in $\mathfrak{A}_a$, which improve previously known results.  Lastly,  Section \ref{sec4} presents some new upper bounds of the numerical radius inequalities for an element in $\mathfrak{A}$ using the Moore-Penrose inverse.  We show that the results presented here refine and generalize
several existing findings.

\section{Preliminaries}\label{sec2} We start this section with several supporting lemmas that will be referenced frequently throughout our main results.

\begin{lemma}\cite[Th. 10.1]{inequality}(Weighted Cauchy-Schwarz inequality)\label{vv2}
Let $\alpha_j$ and $ \beta_j $ 
 ($j=1,2,\cdots,k$) be real numbers and let $w_j$ ($j=1,2,\cdots,k$) be  positive real numbers. Then the following inequality holds:  
 \[\bigg (\sum_{j=1}^kw_j\alpha_j\beta_j\bigg)^2\le \bigg( \sum_{j=1}^kw_j\alpha_j^2\bigg)\bigg( \sum_{j=1}^kw_j\beta_j^2\bigg).\]
\end{lemma}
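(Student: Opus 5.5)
The plan is to reduce the weighted inequality to the ordinary Cauchy--Schwarz inequality in $\mathbb{R}^k$ by absorbing the weights, and to give a self-contained quadratic-polynomial argument as a backup. Since each $w_j>0$, the square root $\sqrt{w_j}$ is a well-defined positive real number. Set $u_j=\sqrt{w_j}\,\alpha_j$ and $t_j=\sqrt{w_j}\,\beta_j$ for $j=1,\dots,k$. Then
\[
\sum_{j=1}^k w_j\alpha_j\beta_j=\sum_{j=1}^k u_jt_j,\qquad \sum_{j=1}^k w_j\alpha_j^2=\sum_{j=1}^k u_j^2,\qquad \sum_{j=1}^k w_j\beta_j^2=\sum_{j=1}^k t_j^2 .
\]
With these substitutions the claimed inequality is exactly the classical Cauchy--Schwarz inequality $\big(\sum_j u_jt_j\big)^2\le \big(\sum_j u_j^2\big)\big(\sum_j t_j^2\big)$ for real vectors.

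To avoid relying on that as a black box, I would prove it directly. Consider the real quadratic polynomial
\[
p(\lambda)=\sum_{j=1}^k w_j(\alpha_j\lambda-\beta_j)^2 .
\]
Because every $w_j>0$, we have $p(\lambda)\ge 0$ for all real $\lambda$. Expanding gives
\[
p(\lambda)=A\lambda^2-2B\lambda+C,\qquad A=\sum_{j=1}^k w_j\alpha_j^2,\quad B=\sum_{j=1}^k w_j\alpha_j\beta_j,\quad C=\sum_{j=1}^k w_j\beta_j^2 .
\]

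The only point needing care is the degenerate case $A=0$. Since every weight is strictly positive, $A=0$ forces $\alpha_j=0$ for all $j$. Then $B=0$, and the inequality reads $0\le 0\cdot C$, which holds trivially.

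If $A>0$, a nonnegative quadratic with positive leading coefficient has nonpositive discriminant. This gives $4B^2-4AC\le 0$, that is, $B^2\le AC$, which is the asserted inequality. Alternatively, evaluating $p$ at $\lambda=B/A$ gives $0\le C-B^2/A$, which is the same conclusion.

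Nothing here is a real obstacle. The only step requiring attention is the case split on $A$, and it is exactly where the strict positivity of the weights $w_j$ is used.
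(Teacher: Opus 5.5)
Your proof is correct and complete. The paper does not prove this lemma at all; it simply quotes it from \cite[Th.~10.1]{inequality}. So your argument is a self-contained substitute for that citation rather than an alternative to the paper's reasoning. Either of your two arguments suffices on its own: rescaling by $\sqrt{w_j}$ to reduce to the classical Cauchy--Schwarz inequality in $\mathbb{R}^k$, or the discriminant of $p(\lambda)=\sum_j w_j(\alpha_j\lambda-\beta_j)^2$.

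One small correction to your closing remark: strict positivity of the weights is not actually needed. For $w_j\ge 0$ the substitution $u_j=\sqrt{w_j}\,\alpha_j$ still works verbatim. In the quadratic argument, $A=0$ forces every term $w_j\alpha_j^2$ to vanish, hence every $w_j\alpha_j\beta_j$ vanishes and $B=0$. Alternatively, a linear function $-2B\lambda+C$ that is nonnegative on all of $\mathbb{R}$ must have $B=0$.

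This is worth recording, because the paper applies the lemma with $w_j=p_j$ from a probability distribution $\mathbf{P}\in\mathcal{P}_k$, where some $p_j$ may be zero. Your proof covers that case directly. The statement as written requires discarding the zero-weight terms first.
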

\begin{lemma}\cite{hardy}\label{young}
     Let $a,b\geq 0$, $0\leq \alpha\leq 1$ and $p,q>1$ such that $\frac{1}{p}+\frac{1}{q}=1$. Then for $r\geq 1$,\\
   $(i)$ $a^\alpha b^{1-\alpha}\leq\alpha a+(1-\alpha)b\leq(\alpha a^r+(1-\alpha)b^r)^\frac{1}{r}$,\\
         $(ii)$  $ab\leq\frac{a^p}{p}+\frac{b^q}{q}\leq\left(\frac{a^{pr}}{p}+\frac{b^{qr}}{q}\right)^\frac{1}{r}$.
\end{lemma}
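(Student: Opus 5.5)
The statement just made is Lemma \ref{young}: the weighted arithmetic–geometric mean (Young) inequality, followed by a power-mean refinement. I would prove part $(i)$ first and then get $(ii)$ from it by substitution.

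For the left inequality in $(i)$, the endpoint cases $\alpha=0$ and $\alpha=1$ are trivial, as is the case where $a$ or $b$ vanishes. So assume $a,b>0$ and $0<\alpha<1$. I would use concavity of $\log$ on $(0,\infty)$:
\[
\log\big(\alpha a+(1-\alpha)b\big)\ge \alpha\log a+(1-\alpha)\log b=\log\big(a^\alpha b^{1-\alpha}\big).
\]
Exponentiating gives $a^\alpha b^{1-\alpha}\le \alpha a+(1-\alpha)b$.

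For the right inequality in $(i)$, I would use convexity of $t\mapsto t^r$ on $[0,\infty)$, which holds because $r\ge 1$. Jensen with weights $\alpha$ and $1-\alpha$ gives
\[
\big(\alpha a+(1-\alpha)b\big)^r\le \alpha a^r+(1-\alpha)b^r .
\]
Taking $r$-th roots, which is monotone on $[0,\infty)$, yields $\alpha a+(1-\alpha)b\le(\alpha a^r+(1-\alpha)b^r)^{1/r}$.

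For $(ii)$, I would apply $(i)$ with $\alpha=1/p$, so that $1-\alpha=1/q$, and with $a^p,b^q$ in place of $a,b$. The left inequality then reads
\[
(a^p)^{1/p}(b^q)^{1/q}=ab\le \frac{a^p}{p}+\frac{b^q}{q}.
\]
The right inequality gives
\[
\frac{a^p}{p}+\frac{b^q}{q}\le\Big(\frac{(a^p)^r}{p}+\frac{(b^q)^r}{q}\Big)^{1/r},
\]
which is exactly the stated bound.

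There is no real obstacle. The only care needed is handling zero values and endpoint weights in the logarithm step, and these are dispatched by direct inspection.
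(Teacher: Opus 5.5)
Your proof is correct: concavity of $\log$ gives the weighted AM--GM inequality, convexity of $t\mapsto t^r$ for $r\ge 1$ gives the power-mean bound, and the substitution $\alpha=1/p$, $a\mapsto a^p$, $b\mapsto b^q$ turns (i) into (ii) exactly. The paper gives no proof of this lemma, only a citation to Hardy--Littlewood--P\'olya, and your argument is the standard one.
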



Let $\mathcal{P}_k$ be the set of all probability distributions over $k$ elements. If $\textbf{P} = (p_1, p_2, \cdots, p_k) \in \mathcal{P}_k$, then  $p_j \ge 0$ for every $j \in\{ 1, 2, \cdots, k\}$, and $\sum_{j=1}^k p_j = 1$.

\begin{lemma}\label{newcauchy} Let $x_j$, $y_j\in \mathfrak{A}$, $j=1,2,\cdots, k$ and $\mu\ge 0$.  If $\textbf{P}=(p_1,p_2,\cdots,p_k)\in \mathcal{P}_k$ is a probability distribution
 and $\varphi$ be a positive linear functional on $\mathfrak{A}$, then 
 \begin{eqnarray*}
\Big|\sum_{j=1}^{k}p_j\varphi(x^*_jay_j)\Big|^{2}&\le&\frac{1}{1+\mu} \Big|\sum_{j=1}^{k}p_j\varphi(x^*_jay_j) \Big|\bigg(\sum_{j=1}^{k}p_j\varphi(x^*_jax_j)\sum_{j=1}^{k}p_j\varphi(y^*_jay_j)\bigg)^{\frac{1}{2}}\nonumber\\
  &&+\frac{\mu}{1+\mu}\bigg(\sum_{j=1}^{k}p_j\sqrt{\varphi(x^*_jax_j)\varphi(y^*_jay_j)}\bigg)^2.
    \end{eqnarray*}
    \end{lemma}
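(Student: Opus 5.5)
The inequality has the shape $S^2 \le \frac{1}{1+\mu}S\,\sqrt{AB} + \frac{\mu}{1+\mu}C^2$, where
\[
S=\Big|\sum_{j=1}^{k}p_j\varphi(x_j^*ay_j)\Big|,\quad A=\sum_{j=1}^{k}p_j\varphi(x_j^*ax_j),\quad B=\sum_{j=1}^{k}p_j\varphi(y_j^*ay_j),\quad C=\sum_{j=1}^{k}p_j\sqrt{\varphi(x_j^*ax_j)\varphi(y_j^*ay_j)} .
\]
Since the right-hand side is a convex combination, it suffices to prove the two inequalities
\[
S\le C \qquad\text{and}\qquad C\le \sqrt{AB}.
\]
Once these hold, we get $S^2\le S\sqrt{AB}$ and $S^2\le C^2$. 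Multiplying the first by $\frac{1}{1+\mu}$, the second by $\frac{\mu}{1+\mu}$, and adding gives the claim, because $\frac{1}{1+\mu}+\frac{\mu}{1+\mu}=1$ for every $\mu\ge 0$.

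For $S\le C$, first apply the triangle inequality to get $S\le \sum_j p_j|\varphi(x_j^*ay_j)|$. Then treat each term separately. Since $a\ge 0$, write $a=a^{1/2}a^{1/2}$, so that
\[
\varphi(x_j^*ay_j)=\varphi\big((a^{1/2}x_j)^*(a^{1/2}y_j)\big).
\]
The Cauchy--Schwarz inequality for the positive functional $\varphi$, recalled in the introduction, then gives
\[
|\varphi(x_j^*ay_j)|^2\le \varphi(x_j^*ax_j)\,\varphi(y_j^*ay_j).
\]
Taking square roots and summing with weights $p_j$ yields $S\le C$.

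For $C\le\sqrt{AB}$, use Lemma \ref{vv2} with weights $w_j=p_j$, $\alpha_j=\sqrt{\varphi(x_j^*ax_j)}$ and $\beta_j=\sqrt{\varphi(y_j^*ay_j)}$. These are real because $\varphi$ is positive and $x_j^*ax_j=(a^{1/2}x_j)^*(a^{1/2}x_j)\ge 0$, and likewise for $y_j$. The lemma gives $C^2\le AB$, so $C\le\sqrt{AB}$. Indices with $p_j=0$ do not meet the positivity requirement on the weights, but they contribute nothing to any of the sums and can simply be dropped.

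I do not expect a genuine obstacle here. The only points needing care are recognizing the convex-combination structure of the right-hand side, and checking that $S\ge 0$, so that multiplying $S\le\sqrt{AB}$ by $S$ preserves the inequality.
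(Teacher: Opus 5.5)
Your proposal is correct and follows the paper's own proof: it writes $S^2=\frac{1}{1+\mu}S^2+\frac{\mu}{1+\mu}S^2$, bounds $S\le C$ by the triangle inequality plus the Cauchy--Schwarz inequality for $\varphi$, and then uses Lemma \ref{vv2} to get $C\le\sqrt{AB}$ (in your notation). The one difference is that you spell out two details the paper leaves implicit: the factorization through $a^{1/2}$ that justifies Cauchy--Schwarz for $\varphi(x_j^*ay_j)$, and the dropping of indices with $p_j=0$ so that Lemma \ref{vv2} applies with strictly positive weights.
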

 \begin{proof} Let $x_j, y_j\in \mathfrak{A} $ and  $\varphi$ be a positive linear functional on $\mathfrak{A}$. Then we have 
\begin{eqnarray}
     && \Big|\sum_{j=1}^{k}p_j\varphi(x^*_jay_j) \Big|^2\nonumber\\
      &=& \frac{1}{1+\mu}  \Big|\sum_{j=1}^{k}p_j\varphi(x^*_jay_j) \Big|^2+\frac{\mu}{1+\mu}  \Big|\sum_{j=1}^{k}p_j\varphi(x^*_jay_j) \Big|^2\nonumber\\
    &\le& \frac{1}{1+\mu}  \Big|\sum_{j=1}^{k}p_j\varphi(x^*_jay_j) \Big|^2+\frac{\mu}{1+\mu} \bigg(\sum_{j=1}^{k}p_j \Big|\varphi(x^*_jay_j) \Big|\bigg)^2\nonumber\\
      &\le&\frac{1}{1+\mu}\Big|\sum_{j=1}^{k}p_j\varphi(x^*_jay_j)\Big|^2+\frac{\mu}{1+\mu}\bigg(\sum_{j=1}^{k}p_j\sqrt{\varphi(x^*_jax_j)}\sqrt{\varphi(y^*_jay_j)}\bigg)^2.\label{sun}
\end{eqnarray}

This indicates that
\begin{eqnarray*}
    \Big|\sum_{j=1}^{k}p_j\varphi(x^*_jay_j) \Big|^2
         &\le& \frac{1}{1+\mu}  \Big|\sum_{j=1}^{k}p_j\varphi(x^*_jay_j) \Big| \bigg(\sum_{j=1}^{k}p_j \Big|\varphi(x^*_jay_j) \Big|\bigg)\\
         &&+\frac{\mu}{1+\mu}\bigg(\sum_{j=1}^{k}p_j\sqrt{\varphi(x^*_jax_j)}\sqrt{\varphi(y^*_jay_j)}\bigg)^2\nonumber\\
       &\le&\frac{1}{1+\mu}\Big|\sum_{j=1}^{k}p_j\varphi(x^*_jay_j)\Big|\bigg(\sum_{j=1}^{k}p_j\sqrt{\varphi(x^*_jax_j)}\sqrt{\varphi(y^*_jay_j)}\bigg)\nonumber\\   
  &&+\frac{\mu}{1+\mu}\bigg(\sum_{j=1}^{k}p_j\sqrt{\varphi(x^*_jax_j)}\sqrt{\varphi(y^*_jay_j)}\bigg)^2.\nonumber
\end{eqnarray*}
Now, using Lemma \ref{vv2}  we get        \begin{eqnarray*}
      \Big|\sum_{j=1}^{k}p_j\varphi(x^*_jay_j) \Big|^2
    &\le&\frac{1}{1+\mu} \Big|\sum_{j=1}^{k}p_j\varphi(x^*_jay_j) \Big|\bigg(\sum_{j=1}^{k}p_j\varphi(x^*_jax_j)\sum_{j=1}^{k}p_j\varphi(y^*_jay_j)\bigg)^{\frac{1}{2}}\nonumber\\
  &&+\frac{\mu}{1+\mu}\bigg(\sum_{j=1}^{k}p_j\sqrt{\varphi(x^*_jax_j)}\sqrt{\varphi(y^*_jay_j)}\bigg)^{2}.\nonumber
    \end{eqnarray*} Hence  the result follows.
    \end{proof}
\begin{remark} Taking $k=1$  in  Lemma \ref{newcauchy}, we get 
 \begin{align*}
 \Big|\varphi(x^*ay)\Big|^{2}
 &\le\frac{1}{1+\mu} \Big|\varphi(x^*ay) \Big|\big(\varphi(x^*ax)\varphi(y^*ay)\big)^{\frac{1}{2}}+\frac{\mu}{1+\mu}\varphi(x^*ax)\varphi(y^*ay)\\
 &\le \varphi(x^*ax)\varphi(y^*ay). 
    \end{align*}
    In particular, when $a=\textbf{1}_{\mathfrak{A}}$, the above inequality provides a refinement of the Cauchy–Schwarz inequality. 
\end{remark}

\begin{lemma}\cite[Lemma 2.9]{smpatra}\label{zzxx}
    Let $x_j$, $y_j\in \mathfrak{A}$, $j=1,2,\cdots, k$.  If $\textbf{P}=(p_1,p_2,\cdots,p_k)\in \mathcal{P}_k$ is a probability distribution
 and $\varphi\in \mathcal{S}_a(\mathfrak{A})$, then 
 \begin{eqnarray*}
     &&\Big| \sum_{j=1}^{k}p_j\varphi(ax_j)\Big|   \Big| \sum_{j=1}^{k}p_j\varphi(y_j^*a)\Big|\\
        &\le& \frac{ 1}{2}\Bigg( \Big| \sum_{j=1}^{k}p_j\varphi(y_j^*ax_j)\Big|+\bigg(  \sum_{j=1}^{k}p_j\varphi(x_j^*ax_j)  \sum_{j=1}^{k}p_j\varphi(y_j^*ay_j)\bigg)^{\frac{1}{2}}\Bigg).
 \end{eqnarray*}
\end{lemma}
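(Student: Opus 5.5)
The plan is to recognize the statement as an instance of Buzano's inequality for a semi-inner product built from $\varphi$, $a$ and the weights $p_j$. We then prove Buzano's inequality directly from the Cauchy--Schwarz inequality.

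First, on $\mathfrak{A}^k$ (tuples $\mathbf{x}=(x_1,\dots,x_k)$) we define
\[
\langle \mathbf{x},\mathbf{y}\rangle=\sum_{j=1}^k p_j\,\varphi(y_j^*ax_j).
\]
This form is linear in $\mathbf{x}$ and conjugate linear in $\mathbf{y}$. It is Hermitian, since $\varphi(z^*)=\overline{\varphi(z)}$ and $(y_j^*ax_j)^*=x_j^*ay_j$ because $a=a^*$. It is positive semidefinite, since $x_j^*ax_j=(a^{1/2}x_j)^*(a^{1/2}x_j)$ and $\varphi\ge 0$. Consequently the Cauchy--Schwarz inequality $|\langle\mathbf{x},\mathbf{y}\rangle|\le\|\mathbf{x}\|\,\|\mathbf{y}\|$ holds, where $\|\mathbf{x}\|=\langle\mathbf{x},\mathbf{x}\rangle^{1/2}$ is a seminorm. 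This is the standard semi-inner product argument; alternatively it follows from the $C^*$-Cauchy--Schwarz inequality for $\varphi$ combined with Lemma~\ref{vv2}.

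Next we take $\mathbf{e}=(\textbf{1}_{\mathfrak{A}},\dots,\textbf{1}_{\mathfrak{A}})$. Since $\varphi\in\mathcal{S}_a(\mathfrak{A})$, we get $\langle\mathbf{e},\mathbf{e}\rangle=\sum_j p_j\varphi(a)=1$. We also have
\[
\langle\mathbf{x},\mathbf{e}\rangle=\sum_j p_j\varphi(ax_j), \qquad \langle\mathbf{e},\mathbf{y}\rangle=\sum_j p_j\varphi(y_j^*a), \qquad \langle \mathbf{x},\mathbf{y}\rangle=\sum_j p_j\varphi(y_j^*ax_j).
\]
Therefore the claimed inequality reads
\[
|\langle\mathbf{x},\mathbf{e}\rangle\langle\mathbf{e},\mathbf{y}\rangle|\le\tfrac12\big(|\langle\mathbf{x},\mathbf{y}\rangle|+\|\mathbf{x}\|\,\|\mathbf{y}\|\big),
\]
which is Buzano's inequality for a unit vector $\mathbf{e}$.

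To prove it, we consider $\mathbf{z}=2\langle\mathbf{x},\mathbf{e}\rangle\mathbf{e}-\mathbf{x}$. Expanding with $\|\mathbf{e}\|=1$ gives
\[
\|\mathbf{z}\|^2=4|\langle\mathbf{x},\mathbf{e}\rangle|^2-2|\langle\mathbf{x},\mathbf{e}\rangle|^2-2|\langle\mathbf{x},\mathbf{e}\rangle|^2+\|\mathbf{x}\|^2=\|\mathbf{x}\|^2 .
\]
Moreover,
\[
\langle\mathbf{z},\mathbf{y}\rangle=2\langle\mathbf{x},\mathbf{e}\rangle\langle\mathbf{e},\mathbf{y}\rangle-\langle\mathbf{x},\mathbf{y}\rangle .
\]
Cauchy--Schwarz then yields
\[
|2\langle\mathbf{x},\mathbf{e}\rangle\langle\mathbf{e},\mathbf{y}\rangle-\langle\mathbf{x},\mathbf{y}\rangle|\le\|\mathbf{x}\|\,\|\mathbf{y}\|.
\]
By the triangle inequality, $2|\langle\mathbf{x},\mathbf{e}\rangle\langle\mathbf{e},\mathbf{y}\rangle|\le|\langle\mathbf{x},\mathbf{y}\rangle|+\|\mathbf{x}\|\,\|\mathbf{y}\|$, which is exactly the statement.

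The only point needing care is the first step: checking that the form is a genuine positive semidefinite Hermitian form, so that Cauchy--Schwarz applies even though it may be degenerate. Once that is in place, the rest is the short reflection computation above.
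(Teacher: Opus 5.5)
Your proof is correct and complete. The form $\langle\mathbf{x},\mathbf{y}\rangle=\sum_j p_j\varphi(y_j^*ax_j)$ is a positive semidefinite Hermitian form on $\mathfrak{A}^k$, and $\mathbf{e}=(\mathbf{1}_{\mathfrak{A}},\dots,\mathbf{1}_{\mathfrak{A}})$ is a unit vector exactly because $\varphi(a)=1$. The reflection $\mathbf{z}=2\langle\mathbf{x},\mathbf{e}\rangle\mathbf{e}-\mathbf{x}$ then yields Buzano's inequality with no nondegeneracy assumption needed.

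The paper gives no proof of this lemma; it only quotes it from \cite{smpatra}. So there is nothing in the paper to compare against, and your Buzano-type argument for the semi-inner product induced by $\varphi$ and $a$ supplies a self-contained proof.
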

Using this lemma, we obtain the following results.

\begin{lemma}\label{firsth}
 Let $x_j$, $y_j\in \mathfrak{A}$, $j=1,2,\cdots, k$ and $\mu\ge 0$.   If $\textbf{P}=(p_1,p_2,\cdots,p_k)\in \mathcal{P}_k$ is a probability distribution
 and $\varphi\in \mathcal{S}_a(\mathfrak{A})$, then 
 \begin{align*}
&\left(\Big|\sum_{j=1}^{k}p_j\varphi(ax_j)\Big|\Big|\sum_{j=1}^{k}p_j\varphi(ay_j)\Big|\right)^{2}\\&\le\frac{1}{4}U+\frac{3+2\mu}{4(1+\mu)}V+\frac{\mu}{4(1+\mu)}\bigg(\sum_{j=1}^kp_j\frac{\varphi(y^{*}_jay_j+x^{*}_jax_j)}{2}\bigg)^{2}, 
 \end{align*} where \begin{align*}
U&=\sum_{j=1}^{k}p_j\varphi(x^{*}_jax_j)\sum_{j=1}^{k}p_j\varphi(y^{*}_jay_j)\\
V&=\Big(\sum_{j=1}^{k}p_j\varphi(x^{*}_jax_j)\sum_{j=1}^{k}p_j\varphi(y^{*}_jay_j)\Big)^{\frac{1}{2}}\Big|\sum_{j=1}^{k}p_j\varphi(x^*_jay_j)\Big|.
 \end{align*}
\end{lemma}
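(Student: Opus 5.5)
The plan is to combine Lemma \ref{zzxx} (squared) with Lemma \ref{newcauchy}, and then finish with the arithmetic–geometric mean inequality. Write
\[
W=\Big|\sum_{j=1}^{k}p_j\varphi(x_j^*ay_j)\Big|,
\]
so that $V=\sqrt{U}\,W$.

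First I rewrite the left-hand side so that Lemma \ref{zzxx} applies. Since $\varphi$ is positive, $\varphi(z^*)=\overline{\varphi(z)}$, and $a$ is self-adjoint. Hence $\varphi(y_j^*a)=\overline{\varphi(ay_j)}$, which gives
\[
\Big|\sum_j p_j\varphi(ay_j)\Big|=\Big|\sum_j p_j\varphi(y_j^*a)\Big|.
\]
In the same way $\varphi(y_j^*ax_j)=\overline{\varphi(x_j^*ay_j)}$, so $\big|\sum_j p_j\varphi(y_j^*ax_j)\big|=W$. Lemma \ref{zzxx} then yields
\[
\Big|\sum_j p_j\varphi(ax_j)\Big|\,\Big|\sum_j p_j\varphi(ay_j)\Big|\le \tfrac12\big(W+\sqrt{U}\big).
\]
Squaring both sides gives
\[
\Big(\Big|\sum_j p_j\varphi(ax_j)\Big|\,\Big|\sum_j p_j\varphi(ay_j)\Big|\Big)^2\le \tfrac14U+\tfrac12V+\tfrac14W^2 .
\]

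Next I bound $W^2$ using Lemma \ref{newcauchy}. Its first term is exactly $\frac{1}{1+\mu}W\sqrt{U}=\frac{1}{1+\mu}V$, so
\[
W^2\le \frac{1}{1+\mu}V+\frac{\mu}{1+\mu}\Big(\sum_j p_j\sqrt{\varphi(x_j^*ax_j)\varphi(y_j^*ay_j)}\Big)^2 .
\]
Substituting this into the previous display, the coefficient of $V$ becomes
\[
\frac12+\frac{1}{4(1+\mu)}=\frac{3+2\mu}{4(1+\mu)},
\]
which is the coefficient in the statement.

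Finally, each term of the remaining sum is handled by the arithmetic–geometric mean inequality (Lemma \ref{young}(i) with $\alpha=1/2$):
\[
\sqrt{\varphi(x_j^*ax_j)\varphi(y_j^*ay_j)}\le \tfrac12\varphi(x_j^*ax_j+y_j^*ay_j).
\]
All quantities involved are nonnegative, so squaring preserves the inequality. This gives the last term of the statement.

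The only delicate point is the first step. One has to check that the conjugation identities really turn the quantities in Lemma \ref{zzxx} into the ones appearing in the statement, namely $\varphi(ay_j)$ versus $\varphi(y_j^*a)$, and $\varphi(y_j^*ax_j)$ versus $\varphi(x_j^*ay_j)$. The rest is bookkeeping of coefficients.
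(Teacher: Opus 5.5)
Your proposal is correct and follows the paper's proof essentially step for step. You square the bound of Lemma~\ref{zzxx} to get $\frac14U+\frac12V+\frac14W^2$, bound $W^2$ with Lemma~\ref{newcauchy} to produce the coefficient $\frac{3+2\mu}{4(1+\mu)}$, and finish with the arithmetic--geometric mean inequality; your explicit check of the identities $\varphi(y_j^*a)=\overline{\varphi(ay_j)}$ and $\varphi(y_j^*ax_j)=\overline{\varphi(x_j^*ay_j)}$ supplies a detail the paper leaves implicit.
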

\begin{proof} Let $x_j, y_j\in \mathfrak{A} $ and $\varphi\in \mathcal{S}_a(\mathfrak{A})$. Now, Lemma \ref{zzxx} yields
\begin{align*}
\Big(\Big|\sum_{j=1}^{k}p_j\varphi(ax_j)\Big|\Big|\sum_{j=1}^{k}p_j\varphi(ay_j)\Big|\Big)^2\le \frac{1}{4}U+\frac{1}{2}V+ \frac{1}{4}\Big|\sum_{j=1}^{k}p_j\varphi(x^*_jay_j)\Big|^2 .    
\end{align*}
       Now, applying Lemma \ref{newcauchy}, we get
    \begin{align*}&\left(\Big|\sum_{j=1}^{k}p_j\varphi(ax_j)\Big|\Big|\sum_{j=1}^{k}p_j\varphi(ay_j)\Big|\right)^2
\\&\le\frac{1}{4}U+\frac{3+2\mu}{4(1+\mu)}V+\frac{\mu}{4(1+\mu)}\bigg(\sum_{j=1}^kp_j\Big(\varphi(y^{*}_jay_j)\varphi(x^{*}_jax_j)\Big)^{\frac{1}{2}}\bigg)^{2}\\
&\le\frac{1}{4}U+\frac{3+2\mu}{4(1+\mu)}V+\frac{\mu}{4(1+\mu)}\bigg(\sum_{j=1}^kp_j\frac{\varphi\big(y^{*}_jay_j+x^{*}_jax_j\big)}{2}\bigg)^{2}.
    \end{align*} 
   This completes the proof.
\end{proof}

The next corollary follows directly from Lemma \ref{firsth}.
\begin{cor}\label{klj1}
     Let $x_j$, $y_j\in \mathfrak{A}_a$, $j=1,2,\cdots, k$ and $\mu\ge 0$.   If $\textbf{P}=(p_1,p_2,\cdots,p_k)\in \mathcal{P}_k$ is a probability distribution
 and $\varphi\in \mathcal{S}_a(\mathfrak{A})$, then 
 \begin{align*}
&\left(\big|\sum_{j=1}^{k}p_j\varphi(ax_j)\big|\big|\sum_{j=1}^{k}p_j\varphi(ay_j)\big|\right)^{2}\\
     \le&\frac{1}{4}\Big(\sum_{j=1}^{k}p_j\varphi(ax^{\#_a}_jx_j)\sum_{j=1}^{k}p_j\varphi(ay_jy^{\#_a}_j)\Big)+\frac{\mu}{4(1+\mu)}\bigg(\sum_{j=1}^kp_j\frac{\varphi(a(y_jy^{\#_a}_j+x^{\#_a}_jx_j))}{2}\bigg)^{2}\\&+\frac{3+2\mu}{4(1+\mu)}\Big(\sum_{j=1}^{k}p_j\varphi(ax^{\#_a}_jx_j)\sum_{j=1}^{k}p_j\varphi(ay_jy^{\#_a}_j)\Big)^{\frac{1}{2}}\Big|\sum_{j=1}^{k}p_j\varphi(ay_jx_j)\Big|.
 \end{align*} 
\end{cor}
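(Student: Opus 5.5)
The plan is to apply Lemma \ref{firsth} with a suitable choice of the second family of elements, and then use the defining identity of the $a$-adjoint to rewrite each quantity of the form $\varphi(z^*az)$ in terms of $\varphi(a\cdot)$. Since the corollary involves $\varphi(ay_j)$ on the left and $y_jy_j^{\#_a}$ on the right, the natural choice is to apply Lemma \ref{firsth} to the pairs $x_j$ and $y_j^{\#_a}$. We then have to check that the left-hand side is unchanged. The terms $x_j^*ax_j$ and $x_j^*ay_j^{\#_a}$ are handled directly by the identity, while the term $(y_j^{\#_a})^*ay_j^{\#_a}$ needs a separate argument, given in the last paragraph.

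First, the left-hand side. For $\varphi\in\mathcal{S}_a(\mathfrak{A})$ we have $\varphi(ay_j^{\#_a})=\varphi(y_j^*a)=\overline{\varphi(ay_j)}$, because $a$ is self-adjoint and $\varphi$ is Hermitian. Hence
\[\Big|\sum_{j}p_j\varphi(ay_j^{\#_a})\Big|=\Big|\sum_j p_j\varphi(ay_j)\Big|,\]
and the left-hand side of Lemma \ref{firsth} with $y_j^{\#_a}$ in place of $y_j$ is exactly the left-hand side of the corollary.

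Next, the cross term and the $x$-terms on the right. Using $x_j^*a=ax_j^{\#_a}$ we get
\[\varphi(x_j^*ay_j^{\#_a})=\varphi(x_j^*a\,y_j^{\#_a}).\]
Writing $ay_j^{\#_a}=y_j^*a$ instead gives $\varphi(x_j^*y_j^*a)=\varphi((y_jx_j)^*a)=\overline{\varphi(ay_jx_j)}$. Its modulus is therefore $|\varphi(ay_jx_j)|$ termwise, and after summing with the weights $p_j$ we obtain $\big|\sum_jp_j\varphi(ay_jx_j)\big|$, since the conjugation is taken outside the sum. Similarly, $\varphi(x_j^*ax_j)=\varphi(ax_j^{\#_a}x_j)$.

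The step I expect to be the main obstacle is the term $\varphi\big((y_j^{\#_a})^*ay_j^{\#_a}\big)$, which we want to equal $\varphi(ay_jy_j^{\#_a})$. The $a$-adjoint is not unique, and $(y_j^{\#_a})^*a$ need not equal $ay_j$. However, $ay_j^{\#_a}=y_j^*a$, so taking adjoints gives $(y_j^{\#_a})^*a=ay_j$. Therefore
\[(y_j^{\#_a})^*ay_j^{\#_a}=ay_jy_j^{\#_a}.\]
Once this identity is in place, all of $U$, $V$, and the last squared term of Lemma \ref{firsth} transform as stated, which gives the corollary.
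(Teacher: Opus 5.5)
Your proposal is correct and follows the paper's own argument: substitute $y_j^{\#_a}$ for $y_j$ in Lemma \ref{firsth}, then use $\varphi(z^*)=\overline{\varphi(z)}$ together with $x_j^*a=ax_j^{\#_a}$, $ay_j^{\#_a}=y_j^*a$ and its adjoint $(y_j^{\#_a})^*a=ay_j$; you write out the verifications the paper leaves implicit. One wording slip: your remark that $(y_j^{\#_a})^*a$ ``need not equal'' $ay_j$ is false, since, as your next line shows, it always does for any choice of $a$-adjoint, so that step is no obstacle at all.
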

\begin{proof}
    Since $y_j\in \mathfrak{A}_a$, there exists $y_j^{\#_a}\in \mathfrak{A} $ such that $ay_j^{\#_a}=y_j^*a$ for all $1\le j\le k$. Substituting $y_j=y_j^{\#_a}$ in Lemma \ref{firsth} and using the fact $\varphi(z^*)=\overline{\varphi(z)}$ for any $z\in\mathfrak{A}$, we obtain the desired result.
\end{proof}
\begin{lemma}\label{bads} Let $x_j$, $y_j\in \mathfrak{A}$, $j=1,2,\cdots, k$ and $\mu\ge 0$.   If $\textbf{P}=(p_1,p_2,\cdots,p_k)\in \mathcal{P}_k$ is a probability distribution
 and $\varphi\in \mathcal{S}_a(\mathfrak{A})$, then 
\begin{align*}
&  \bigg( \Big|\sum_{j=1}^kp_j\varphi(ax_j)\Big|   \Big| \sum_{j=1}^kp_j\varphi(ay_j)\Big|\bigg)^{2}\\
\le& \frac{1}{2(1+\mu)} \Big|\sum_{j=1}^{k}p_j\varphi(x^*_jay_j) \Big|^{2}+\frac{\mu}{2(1+\mu)}\bigg(\sum_{j=1}^{k}p_j\frac{\varphi(x^*_jax_j+y^*_jay_j)}{2}\bigg)^{2}\\
  &+  \frac{1}{2}\Big(\sum_{j=1}^kp_j\varphi(x^*_jax_j)\sum_{j=1}^kp_j  \varphi(y^*_jay_j)\Big).
\end{align*}
\end{lemma}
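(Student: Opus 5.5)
The plan is to combine Lemma \ref{zzxx} with the elementary bound $(s+t)^2\le 2(s^2+t^2)$, and then treat the cross term $\big|\sum_j p_j\varphi(x_j^*ay_j)\big|^2$ with the splitting trick from the proof of Lemma \ref{newcauchy}, namely inequality (\ref{sun}).

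\emph{Step 1.} Since $a$ is self-adjoint, $\varphi(y_j^*a)=\overline{\varphi(ay_j)}$, so $\big|\sum_j p_j\varphi(y_j^*a)\big|=\big|\sum_j p_j\varphi(ay_j)\big|$. Likewise $\varphi(y_j^*ax_j)=\overline{\varphi(x_j^*ay_j)}$, so the two sums $\big|\sum_j p_j\varphi(y_j^*ax_j)\big|$ and $\big|\sum_j p_j\varphi(x_j^*ay_j)\big|$ coincide. Lemma \ref{zzxx} then gives
\[
\Big|\sum_{j=1}^kp_j\varphi(ax_j)\Big|\Big|\sum_{j=1}^kp_j\varphi(ay_j)\Big|\le \frac12\big(s+t\big),
\]
where $s=\big|\sum_j p_j\varphi(x_j^*ay_j)\big|$ and $t=\big(\sum_j p_j\varphi(x_j^*ax_j)\sum_j p_j\varphi(y_j^*ay_j)\big)^{1/2}$.

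\emph{Step 2.} Square both sides and use convexity of $u\mapsto u^2$, that is $\big(\frac{s+t}{2}\big)^2\le \frac{s^2+t^2}{2}$. This bounds the left-hand side by $\frac12 s^2+\frac12 t^2$. The term $\frac12 t^2$ is exactly the last term of the statement.

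\emph{Step 3.} Write $s^2=\frac{1}{1+\mu}s^2+\frac{\mu}{1+\mu}s^2$. Estimate only the second piece, exactly as in (\ref{sun}): first apply the triangle inequality, then the Cauchy–Schwarz inequality for the positive functional $\varphi$ (with the positive element $a$, via $x_j^*ay_j=(a^{1/2}x_j)^*(a^{1/2}y_j)$). This gives
\[
s^2\le\frac{1}{1+\mu}s^2+\frac{\mu}{1+\mu}\Big(\sum_j p_j\sqrt{\varphi(x_j^*ax_j)\varphi(y_j^*ay_j)}\Big)^2 .
\]
Next, apply the AM–GM inequality (Lemma \ref{young}(i) with $\alpha=\tfrac12$) termwise: $\sqrt{\varphi(x_j^*ax_j)\varphi(y_j^*ay_j)}\le \frac{\varphi(x_j^*ax_j+y_j^*ay_j)}{2}$. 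Since all the quantities involved are nonnegative, squaring preserves the inequality.

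\emph{Step 4.} Multiply the bound from Step 3 by $\frac12$ and substitute it into Step 2; this yields the claimed estimate. No step is a genuine obstacle. The only point needing care is Step 1: matching the conjugated sums with the form required by Lemma \ref{zzxx}, using $a^*=a$ and $\varphi(z^*)=\overline{\varphi(z)}$.
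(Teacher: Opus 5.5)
Your proposal is correct and follows the paper's proof step for step. The chain is the same: Lemma \ref{zzxx}, then $(s+t)^2\le 2(s^2+t^2)$, then the splitting inequality (\ref{sun}) on the cross term, then termwise AM--GM. Your explicit check in Step 1 that $\big|\sum_j p_j\varphi(y_j^*ax_j)\big|=\big|\sum_j p_j\varphi(x_j^*ay_j)\big|$ fills a small step the paper leaves implicit.
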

\begin{proof} Let $x_j, y_j\in \mathfrak{A} $ and $\varphi\in \mathcal{S}_a(\mathfrak{A})$. Then we have
      \begin{align*}
      & \bigg( \Big|\sum_{j=1}^kp_j\varphi(ax_j)\Big|  \Big | \sum_{j=1}^kp_j\varphi(ay_j)\Big|\bigg)^2\\
       &= \bigg( \Big|\sum_{j=1}^kp_j\varphi(ax_j)\Big|  \Big | \sum_{j=1}^kp_j\varphi(y_j^*a)\Big|\bigg)^2\\
        &\le \frac{  1}{4}\bigg(\Big| \sum_{j=1}^kp_j\varphi(x^*_jay_j)\Big|+\Big( \sum_{j=1}^kp_j \varphi(x^*_jax_j)  \sum_{j=1}^kp_j\varphi(y^*_jay_j)\Big)^{\frac{1}{2}}\bigg)^2\\
           &\le \frac{  1}{2}\Big| \sum_{j=1}^kp_j\varphi(x^*_jay_j)\Big|^2+\frac{1}{2}\Big( \sum_{j=1}^kp_j \varphi(x^*_jax_j)  \sum_{j=1}^kp_j\varphi(y^*_jay_j)\Big)\\
      &\le \frac{1}{2(1+\mu)} \Big|\sum_{j=1}^{k}p_j\varphi(x^*_jay_j) \Big|^{2}+\frac{\mu}{2(1+\mu)}\bigg(\sum_{j=1}^{k}p_j\sqrt{\varphi(x^*_jax_j)}\sqrt{\varphi(y^*_jay_j)}\bigg)^{2}\\
  &\,\,\,\,\,\,+  \frac{1
  }{2}\Big(\sum_{j=1}^kp_j\varphi(x^*_jax_j)\sum_{j=1}^kp_j  \varphi(y^*_jay_j)\Big)\\
  &\le\frac{1}{2(1+\mu)} \Big|\sum_{j=1}^{k}p_j\varphi(x^*_jay_j) \Big|^{2}+\frac{\mu}{2(1+\mu)}\bigg(\sum_{j=1}^{k}p_j\frac{\varphi(x^*_jax_j+y^*_jay_j)}{2}\bigg)^{2}\\
  &\,\,\,\,\,\,+  \frac{1}{2}\Big(\sum_{j=1}^kp_j\varphi(x^*_jax_j)\sum_{j=1}^kp_j  \varphi(y^*_jay_j)\Big),
    \end{align*} 
    where the first inequality is derived from Lemma \ref{zzxx}, while the second is obtained using Lemma \ref{vv2}. The third inequality comes from the inequality (\ref{sun}), and the last inequality follows from $ab\le \frac{a^2+b^2}{2}$ for all $a,b\ge 0$. 
     Hence, the result follows.
\end{proof}

The following corollary is an immediate consequence of Lemma \ref{bads}.
\begin{cor}\label{2ndcor}
    Let $x_j$, $y_j\in \mathfrak{A}_a$, $j=1,2,\cdots, k$ and $\mu\ge 0$.  If $\textbf{P}=(p_1,p_2,\cdots,p_k)\in \mathcal{P}_k$ is a probability distribution
 and $\varphi\in \mathcal{S}_a(\mathfrak{A})$, then
\begin{align*}
&  \bigg(\Big |\sum_{j=1}^kp_j\varphi(ax_j)\Big|  \Big | \sum_{j=1}^kp_j\varphi(ay_j)\Big|\bigg)^{2}\\
   &\le \frac{1}{2(1+\mu)} \Big|\sum_{j=1}^{k}p_j\varphi(ay_jx_j) \Big|^{2}+\frac{\mu}{2(1+\mu)}\bigg(\sum_{j=1}^{k}p_j\frac{\varphi\big(a(x^{\#_a}_jx_j+y_jy^{\#_a}_j)\big)}{2}\bigg)^{2}\\
  &\,\,\,\,\,\,+  \frac{1}{2}\Big(\sum_{j=1}^kp_j\varphi(ax^{\#_a}_jx_j)\sum_{j=1}^kp_j  \varphi(ay_jy^{\#_a}_j)\Big).
\end{align*}
\end{cor}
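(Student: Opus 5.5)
The plan is to deduce Corollary \ref{2ndcor} from Lemma \ref{bads} by the same substitution used for Corollary \ref{klj1}. Since each $y_j\in\mathfrak{A}_a$, there is $y_j^{\#_a}\in\mathfrak{A}$ with $y_j^*a=ay_j^{\#_a}$. We apply Lemma \ref{bads} with $x_j$ unchanged and $y_j$ replaced by $y_j^{\#_a}$. Then we rewrite each term through the relations $x_j^*a=ax_j^{\#_a}$ and $y_j^*a=ay_j^{\#_a}$.

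First, the left-hand side. By the lemma it contains $\big|\sum_j p_j\varphi(ay_j^{\#_a})\big|$. Using $ay_j^{\#_a}=y_j^*a=(ay_j)^*$ and $\varphi(z^*)=\overline{\varphi(z)}$, we get
\[
\sum_j p_j\varphi(ay_j^{\#_a})=\overline{\sum_j p_j\varphi(ay_j)},
\]
which has the same modulus as $\sum_j p_j\varphi(ay_j)$. So the left-hand side is exactly the one in the corollary.

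Next, the terms on the right-hand side, one at a time.
\begin{itemize}
\item For the diagonal $x$-term: $x_j^*ax_j=ax_j^{\#_a}x_j$, so $\varphi(x_j^*ax_j)=\varphi(ax_j^{\#_a}x_j)$.
\item For the diagonal $y$-term: $(y_j^{\#_a})^*ay_j^{\#_a}=(ay_j^{\#_a})^*y_j^{\#_a}=(y_j^*a)^*y_j^{\#_a}=ay_jy_j^{\#_a}$, so $\varphi\big((y_j^{\#_a})^*ay_j^{\#_a}\big)=\varphi(ay_jy_j^{\#_a})$.
\item For the mixed term: $x_j^*ay_j^{\#_a}=x_j^*y_j^*a=(ay_jx_j)^*$, hence $\big|\sum_j p_j\varphi(x_j^*ay_j^{\#_a})\big|=\big|\sum_j p_j\varphi(ay_jx_j)\big|$.
\end{itemize}
By linearity of $\varphi$, the averaged middle term becomes $\sum_j p_j\,\varphi\big(a(x_j^{\#_a}x_j+y_jy_j^{\#_a})\big)/2$.

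Substituting these identities into Lemma \ref{bads} gives the corollary term for term. The only step needing care is the identity $(y_j^{\#_a})^*ay_j^{\#_a}=ay_jy_j^{\#_a}$. It holds because $a$ is self-adjoint, so $(ay_j^{\#_a})^*=(y_j^*a)^*=ay_j$. No estimate is lost anywhere: every step is an identity.
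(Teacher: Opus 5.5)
Your proposal is correct and follows the paper's own proof: substitute $y_j^{\#_a}$ for $y_j$ in Lemma \ref{bads}, then convert each term using $x_j^*a=ax_j^{\#_a}$, $y_j^*a=ay_j^{\#_a}$, $a^*=a$ and $\varphi(z^*)=\overline{\varphi(z)}$. You spell out the term-by-term identities that the paper leaves implicit, including $(y_j^{\#_a})^*ay_j^{\#_a}=ay_jy_j^{\#_a}$ and $x_j^*ay_j^{\#_a}=(ay_jx_j)^*$.
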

\begin{proof}
    Since $y_j\in \mathfrak{A}_a$, there exists $y_j^{\#_a}\in \mathfrak{A} $ such that $ay_j^{\#_a}=y_j^*a$ for all $1\le j\le k$. Substituting $y_j=y_j^{\#_a}$ in Lemma \ref{bads} and using the fact $\varphi(z^*)=\overline{\varphi(z)}$ for any $z\in\mathfrak{A}$, we obtain the desired result.
\end{proof}

\begin{lemma}\label{moore}
Let $x$ be a regular element of $\mathfrak{A}$ and $\mu\ge 0$.   If $\varphi\in \mathcal{S}(\mathfrak{A})$, then
\begin{eqnarray*}
     \Big |\varphi(x)\Big|^{2}\le \frac{1}{1+\mu} \Big|\varphi(x) \Big|\bigg(\varphi(xx^{\dag})\varphi(x^*x)\bigg)^{\frac{1}{2}}+\frac{\mu}{1+\mu}\varphi(xx^{\dag})\varphi(x^*x).
\end{eqnarray*}
\end{lemma}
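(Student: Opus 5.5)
Write $x=(xx^{\dag})x$ using property (ii) of the Moore--Penrose inverse, and set $e=xx^{\dag}$. By (iv), $e$ is self-adjoint. By (i)--(ii), $e^2=xx^{\dag}xx^{\dag}=xx^{\dag}=e$, so $e$ is a projection and $e^*e=e$. Then
\[
\varphi(x)=\varphi(e^*x)=\varphi\big((e)^*\,\textbf{1}_{\mathfrak{A}}\,x\big),
\]
which has the form $\varphi(u^*ay)$ appearing in the remark after Lemma \ref{newcauchy}, with $a=\textbf{1}_{\mathfrak{A}}$, $u=e$ and $y=x$. Since a state is in particular a positive linear functional, the $k=1$ case of Lemma \ref{newcauchy} applies and gives
\[
|\varphi(e^*x)|^2\le \frac{1}{1+\mu}|\varphi(e^*x)|\big(\varphi(e^*e)\varphi(x^*x)\big)^{\frac12}+\frac{\mu}{1+\mu}\varphi(e^*e)\varphi(x^*x).
\]

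Next substitute $\varphi(e^*e)=\varphi(e)=\varphi(xx^{\dag})$ and $\varphi(e^*x)=\varphi(x)$, which yields exactly the claimed inequality. Note that $\varphi(xx^{\dag})\ge 0$ because $e=e^*e$ is positive, so the square root is well defined.

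If Lemma \ref{newcauchy} is read as requiring $a$ to be the fixed positive element, this is harmless: the statement holds for any nonzero positive $a$, and in particular for $a=\textbf{1}_{\mathfrak{A}}$. Alternatively, one can repeat its two-line proof directly. Split $|\varphi(x)|^2$ into the weights $\frac{1}{1+\mu}$ and $\frac{\mu}{1+\mu}$. Bound one factor of $|\varphi(x)|$ in the first piece, and all of the second piece, by the Cauchy--Schwarz inequality $|\varphi(e^*x)|^2\le\varphi(e^*e)\varphi(x^*x)$.

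The only point needing care is verifying that $xx^{\dag}$ is a self-adjoint idempotent. This follows immediately from the Moore--Penrose axioms, so I expect no real obstacle.
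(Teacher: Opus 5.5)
Your proposal is correct and is essentially the paper's own proof: the paper also writes $\varphi(x)=\varphi\big((xx^{\dag})^*x\big)$ and applies the $k=1$ case of Lemma~\ref{newcauchy} with $a=\textbf{1}_{\mathfrak{A}}$. Your explicit check that $xx^{\dag}$ is a self-adjoint idempotent, so that $\varphi\big((xx^{\dag})^*xx^{\dag}\big)=\varphi(xx^{\dag})\ge 0$, fills in a step the paper leaves implicit.
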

\begin{proof} Let  $x$ be a regular element of $\mathfrak{A}$ and $x^{\dag}$ represents the corresponding Moore-penrose inverse. Then  for any $\varphi\in \mathcal{S}(\mathfrak{A})$ we have
    \begin{align*}
       \Big |\varphi(x)\Big|^{2}&=\Big|\varphi(xx^{\dag}x)\Big|^{2}\\
        &= \Big|\varphi((xx^{\dag})^*x)\Big|^{2}\\
        &\le\frac{1}{1+\mu} \Big|\varphi(x) \Big|\bigg(\varphi(xx^{\dag})\varphi(x^*x)\bigg)^{\frac{1}{2}}+\frac{\mu}{1+\mu}\varphi(xx^{\dag})\varphi(x^*x),
    \end{align*} where the above inequality is derived from Lemma \ref{newcauchy}. Hence, the result follows.
\end{proof}

\section{\textbf{Some upper bounds of $a$-numerical radius}}\label{sec3} 
In this section,  several upper bounds for the $a$-numerical
radius involving $a$-Crawford number are derived, which refine some existing ones.
\begin{theorem}\label{bbvvm} Let $x_j\in \mathfrak{A}_a$, $j=1,2,\cdots, k$ and $\mu\ge 0$.   If $\textbf{P}=(p_1,p_2,\cdots,p_k)\in \mathcal{P}_k$ is a probability distribution, then   
    \begin{align*}
         v^{4}_a\Big(\sum_{j=1}^k p_jx_j\Big)\le&
\frac{1+2\mu}{16(1+\mu)}\bigg(\sum_{j=1}^{k}p_j\left\|x^{\#_a}_jx_j+x_jx^{\#_a}_j\right\|_a\bigg)^2-\frac{1}{16}c^2_{a}\bigg(\sum_{j=1}^{k}p_j\left(x^{\#_a}_jx_j-x_jx^{\#_a}_j\right)\bigg)\nonumber\\&+\frac{3+2\mu}{8(1+\mu)}v_a\Big(\sum_{j=1}^{k}p_jx^2_j\Big)\Bigg(\bigg(\sum_{j=1}^{k}p_j\left\|x^{\#_a}_jx_j+x_jx^{\#_a}_j\right\|_a\bigg)^2\\&-c^2_{a}\bigg(\sum_{j=1}^{k}p_j\left(x^{\#_a}_jx_j-x_jx^{\#_a}_j\right)\bigg)\Bigg)^{\frac{1}{2}}\nonumber.
    \end{align*}

\end{theorem}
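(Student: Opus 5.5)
Fix $\varphi\in\mathcal{S}_a(\mathfrak{A})$ and put $x=\sum_j p_jx_j$. Since $v_a(x)=\sup_\varphi|\varphi(ax)|$, it suffices to bound $|\varphi(ax)|^4=\big(|\sum_j p_j\varphi(ax_j)|\,|\sum_j p_j\varphi(ax_j)|\big)^2$ by the right-hand side, uniformly in $\varphi$, and then take the supremum. The natural tool is Corollary \ref{klj1} with $y_j=x_j$. Write
\[
\alpha=\sum_j p_j\varphi(ax^{\#_a}_jx_j),\qquad \beta=\sum_j p_j\varphi(ax_jx^{\#_a}_j).
\]
Both are real and nonnegative, because $\varphi(ax^{\#_a}_jx_j)=\varphi(x_j^*ax_j)\ge 0$ and, similarly, $\varphi(ax_jx^{\#_a}_j)=\varphi((x_j^{\#_a})^*ax_j^{\#_a})\ge0$. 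With this notation Corollary \ref{klj1} gives
\[
|\varphi(ax)|^4\le \tfrac14\alpha\beta+\tfrac{\mu}{4(1+\mu)}\Big(\tfrac{\alpha+\beta}{2}\Big)^2+\tfrac{3+2\mu}{4(1+\mu)}\sqrt{\alpha\beta}\,\Big|\varphi\Big(a\sum_jp_jx_j^2\Big)\Big|.
\]
The last factor is at most $v_a(\sum_j p_jx_j^2)$.

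The key step is to express $\alpha\beta$ through a sum and a difference:
\[
\alpha\beta=\tfrac14\big((\alpha+\beta)^2-(\alpha-\beta)^2\big).
\]
For the sum, $\alpha+\beta=\sum_jp_j\varphi(a(x^{\#_a}_jx_j+x_jx^{\#_a}_j))$. Each element $x^{\#_a}_jx_j+x_jx^{\#_a}_j$ is $a$-self-adjoint, so $\varphi(a\,\cdot)\le v_a(\cdot)\le\|\cdot\|_a$, and hence
\[
\alpha+\beta\le S:=\sum_jp_j\|x^{\#_a}_jx_j+x_jx^{\#_a}_j\|_a.
\]
For the difference, $|\alpha-\beta|=|\varphi(a\,\sum_jp_j(x^{\#_a}_jx_j-x_jx^{\#_a}_j))|\ge c_a(D)$, where $D=\sum_jp_j(x^{\#_a}_jx_j-x_jx^{\#_a}_j)$, because $c_a$ is the infimum of $|z|$ over $V_a(D)$. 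Combining the two,
\[
\alpha\beta\le\tfrac14\big(S^2-c_a^2(D)\big)\qquad\text{and}\qquad \Big(\tfrac{\alpha+\beta}2\Big)^2\le \tfrac14 S^2.
\]

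Next I substitute these bounds into the inequality from Corollary \ref{klj1}:
\[
|\varphi(ax)|^4\le \tfrac1{16}\big(S^2-c_a^2(D)\big)+\tfrac{\mu}{16(1+\mu)}S^2+\tfrac{3+2\mu}{8(1+\mu)}\,v_a\Big(\sum_jp_jx_j^2\Big)\big(S^2-c_a^2(D)\big)^{1/2}.
\]
The first two terms combine, since $\frac1{16}+\frac{\mu}{16(1+\mu)}=\frac{1+2\mu}{16(1+\mu)}$, giving the coefficient of $S^2$ in the statement together with $-\frac1{16}c_a^2(D)$. In the last term, $\frac{3+2\mu}{4(1+\mu)}\cdot\frac12$ produces the coefficient $\frac{3+2\mu}{8(1+\mu)}$. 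Taking the supremum over $\varphi$ then yields the theorem.

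The main obstacle I expect is the justification of the two membership and inequality facts: that $\varphi(ax_jx^{\#_a}_j)\ge0$, and that $\varphi(az)\le\|z\|_a$ for $a$-self-adjoint $z$. If the latter is not directly available, I would instead use $\varphi(az)\le v_a(z)=\|z\|_a$, which is the equality case of (\ref{in2}) for $a$-self-adjoint elements. I also need that $S^2-c_a^2(D)\ge 0$, so that the square root is real; this follows from $4\alpha\beta\ge0$ together with the chain above.
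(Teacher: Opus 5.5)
Your proposal is correct and matches the paper's proof step for step. Like the paper, you apply Corollary \ref{klj1} with $y_j=x_j$, write $4\alpha\beta=(\alpha+\beta)^2-(\alpha-\beta)^2$, bound $\alpha+\beta\le S$ and $|\alpha-\beta|\ge c_a(D)$, combine the coefficients, and take the supremum over $\varphi\in\mathcal{S}_a(\mathfrak{A})$. Your extra justifications fill in steps the paper leaves implicit: $\varphi(ax_jx_j^{\#_a})\ge 0$ because $ax_jx_j^{\#_a}=(x_j^{\#_a})^*ax_j^{\#_a}$, and $\varphi(az)\le v_a(z)\le\|z\|_a$ for $z\in\mathfrak{A}_a$ by (\ref{in2}).
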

\begin{proof} 
Let $x_j\in \mathfrak{A}_a$, $j=1,2,\cdots, k$ and $\varphi\in \mathcal{S}_a(\mathfrak{A})$. Then  Corollary \ref{klj1} yields
    \begin{align}
       & \Big|\sum_{j=1}^k p_j\varphi(ax_j)\Big|^{4}\nonumber\\=&\left(\Big|\sum_{j=1}^{k}p_j\varphi(ax_j)\Big|\Big|\sum_{j=1}^{k}p_j\varphi(ax_j)\Big|\right)^{2}\nonumber\\
\le&
\frac{1}{4}\Big(\sum_{j=1}^{k}p_j\varphi(ax^{\#_a}_jx_j)\sum_{j=1}^{k}p_j\varphi(ax_jx^{\#_a}_j)\Big)\nonumber\\&+\frac{3+2\mu}{4(1+\mu)}\Big(\sum_{j=1}^{k}p_j\varphi(ax^{\#_a}_jx_j)\sum_{j=1}^{k}p_j\varphi(ax_jx^{\#_a}_j)\Big)^{\frac{1}{2}}\Big|\sum_{j=1}^{k}p_j\varphi(ax^2_j)\Big|\nonumber\\&+\frac{\mu}{4(1+\mu)}\bigg(\sum_{j=1}^kp_j\frac{\varphi(a(x_jx^{\#_a}_j+x^{\#_a}_jx_j))}{2}\bigg)^{2},\label{p}
    \end{align}
Next, by applying the identity
$4ab=(a+b)^2-(a-b)^2$ for all $a,b\in\mathbb{R}$, we observe that \begin{align}
\nonumber&\sum_{j=1}^{k}p_j\varphi(ax^{\#_a}_jx_j)\sum_{j=1}^{k}p_j\varphi(ax_jx^{\#_a}_j)\\
&=\frac{1}{4}\bigg(\sum_{j=1}^{k}p_j\varphi\left(a(x^{\#_a}_jx_j+x_jx^{\#_a}_j)\right)\bigg)^2-\frac{1}{4}\bigg(\sum_{j=1}^{k}p_j\varphi\left(a(x^{\#_a}_jx_j-x_jx^{\#_a}_j)\right)\bigg)^2\nonumber\\
&\le \frac{1}{4}\bigg(\sum_{j=1}^{k}p_j\left\|x^{\#_a}_jx_j+x_jx^{\#_a}_j\right\|_a\bigg)^2-\frac{1}{4}c^2_{a}\bigg(\sum_{j=1}^{k}p_j\left(x^{\#_a}_jx_j-x_jx^{\#_a}_j\right)\bigg).\label{q}
    \end{align}
Now, from the inequalities (\ref{p}) and (\ref{q}), it follows that
\begin{align*}
    &\Big|\varphi\Big(a\Big(\sum_{j=1}^k p_jx_j\Big)\Big)\Big|^{4}\\=&\Big|\sum_{j=1}^k p_j\varphi(ax_j)\Big|^{4}\\\le&
\frac{1+2\mu}{16(1+\mu)}\bigg(\sum_{j=1}^{k}p_j\left\|x^{\#_a}_jx_j+x_jx^{\#_a}_j\right\|_a\bigg)^2-\frac{1}{16}c^2_{a}\bigg(\sum_{j=1}^{k}p_j\left(x^{\#_a}_jx_j-x_jx^{\#_a}_j\right)\bigg)\nonumber\\&+\frac{3+2\mu}{8(1+\mu)}v_a\Big(\sum_{j=1}^{k}p_jx^2_j\Big)\Bigg(\bigg(\sum_{j=1}^{k}p_j\left\|x^{\#_a}_jx_j+x_jx^{\#_a}_j\right\|_a\bigg)^2\\&-c^2_{a}\bigg(\sum_{j=1}^{k}p_j\left(x^{\#_a}_jx_j-x_jx^{\#_a}_j\right)\bigg)\Bigg)^{\frac{1}{2}}\nonumber.
 \end{align*}
Therefore, by taking the supremum over all $\varphi\in \mathcal{S}_a(\mathfrak{A})$, we arrive at the required result.

\end{proof}
Taking $k=1$ in Theorem \ref{bbvvm}  yields the following corollary.

\begin{cor}\label{cor01}
      Let $\mathfrak{A} $ be a unital ${C}^*$-algebra and $\mu\ge 0$.   Then  
 
    \begin{align*}
         v^{4}_a\Big(x\Big)\le&
\frac{1+2\mu}{16(1+\mu)}\left\|x^{\#_a}x+xx^{\#_a}\right\|_a^2-\frac{1}{16}c^2_{a}\bigg(x^{\#_a}x-xx^{\#_a}\bigg)\nonumber\\&+\frac{3+2\mu}{8(1+\mu)}v_a\Big(x^2\Big)\Bigg(\left\|x^{\#_a}x+xx^{\#_a}\right\|_a^2-c^2_{a}\bigg(x^{\#_a}x-xx^{\#_a}\bigg)\Bigg)^{\frac{1}{2}}\nonumber.
    \end{align*}
   
\end{cor}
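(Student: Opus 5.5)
The plan is to specialize Theorem \ref{bbvvm} to a single element. Implicitly $x\in\mathfrak{A}_a$, with a fixed $a$-adjoint $x^{\#_a}$, as in the standing hypotheses of this section. Take $k=1$, $p_1=1$ and $x_1=x$ in Theorem \ref{bbvvm}. Then $\sum_j p_jx_j=x$ and $\sum_j p_jx_j^2=x^2$. Moreover,
\[
\sum_{j}p_j\|x_j^{\#_a}x_j+x_jx_j^{\#_a}\|_a=\|x^{\#_a}x+xx^{\#_a}\|_a,
\qquad
\sum_j p_j\left(x_j^{\#_a}x_j-x_jx_j^{\#_a}\right)=x^{\#_a}x-xx^{\#_a}.
\]
Substituting these identities into the right-hand side of Theorem \ref{bbvvm} gives exactly the displayed bound in Corollary \ref{cor01}.

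If one prefers to check the argument directly rather than by substitution, fix $\varphi\in\mathcal{S}_a(\mathfrak{A})$ and apply Corollary \ref{klj1} with $k=1$ and $x_1=y_1=x$. This bounds $|\varphi(ax)|^4$ by three terms:
\begin{itemize}
\item $\tfrac14\varphi(ax^{\#_a}x)\varphi(axx^{\#_a})$;
\item a term involving $|\varphi(ax^2)|\le v_a(x^2)$;
\item $\tfrac{\mu}{4(1+\mu)}\big(\tfrac12\varphi(a(x^{\#_a}x+xx^{\#_a}))\big)^2$.
\end{itemize}

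Next use the polarization identity $4st=(s+t)^2-(s-t)^2$. Both $\varphi(ax^{\#_a}x)$ and $\varphi(axx^{\#_a})$ are nonnegative reals: for instance $ax^{\#_a}x=x^*ax$, and $axx^{\#_a}=(x^{\#_a})^*a\,x^{\#_a}$ (using $ax=(x^{\#_a})^*a$, which follows from $x^*a=ax^{\#_a}$). The identity then gives
\[
\varphi(ax^{\#_a}x)\,\varphi(axx^{\#_a})\le \tfrac14\|x^{\#_a}x+xx^{\#_a}\|_a^2-\tfrac14 c_a^2(x^{\#_a}x-xx^{\#_a}).
\]
Here we use that $\varphi(a(x^{\#_a}x+xx^{\#_a}))$ lies in $V_a(x^{\#_a}x+xx^{\#_a})$, which is bounded by $v_a\le\|\cdot\|_a$ by (\ref{in2}), and that $|\varphi(a(x^{\#_a}x-xx^{\#_a}))|\ge c_a(x^{\#_a}x-xx^{\#_a})$ by the definition of $c_a$.

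The $\mu$-term is bounded by the same $\tfrac{1}{16}\|x^{\#_a}x+xx^{\#_a}\|_a^2$ factor. Collecting coefficients gives
\[
\frac{1}{16}+\frac{\mu}{16(1+\mu)}=\frac{1+2\mu}{16(1+\mu)}.
\]
Taking the supremum over $\varphi$ then yields the claim.

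There is no real obstacle here. The only point needing care is that the sign of the $c_a^2$ subtraction is preserved, which holds because the first factor $\|x^{\#_a}x+xx^{\#_a}\|_a^2$ dominates the second; all of this is already handled inside Theorem \ref{bbvvm}.
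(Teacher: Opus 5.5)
Your proposal is correct and is exactly the paper's argument: the corollary is Theorem \ref{bbvvm} specialized to $k=1$, $p_1=1$, $x_1=x$, with the implicit hypothesis $x\in\mathfrak{A}_a$ that you rightly make explicit. Your optional direct check reproduces the paper's proof of Theorem \ref{bbvvm} in the case $k=1$: Corollary \ref{klj1}, then the identity $4st=(s+t)^2-(s-t)^2$, then the bounds via $\|\cdot\|_a$ and $c_a$.
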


\begin{remark}$(i)$  Since $v_a(x^2)\le v_a^2(x)\le \frac{1}{2}\|x^\#_ax+xx^\#_a \|_a$ and $c^2_{a}\big(x^{\#_a}x-xx^{\#_a}\big)\ge 0$, it follows that
 \begin{align*}&\frac{1+2\mu}{16(1+\mu)}\left\|x^{\#_a}x+xx^{\#_a}\right\|_a^2-\frac{1}{16}c^2_{a}\bigg(x^{\#_a}x-xx^{\#_a}\bigg)\nonumber\\&+\frac{3+2\mu}{8(1+\mu)}v_a\Big(x^2\Big)\Bigg(\left\|x^{\#_a}x+xx^{\#_a}\right\|_a^2-c^2_{a}\bigg(x^{\#_a}x-xx^{\#_a}\bigg)\Bigg)^{\frac{1}{2}}\\
        \le&\frac{1+2\mu}{16(1+\mu)}\Big\|x^{\#_a}x+xx^{\#_a}\Big\|^{2}_a+ \frac{3+2\mu}{8(1+\mu)}\Big\|x^{\#_a}x+xx^{\#_a}\Big\|_av_a\big(x^2\big)\\
            \le& \frac{1}{4}\Big\|x^{\#_a}x+xx^{\#_a}\Big\|_a^2.
    \end{align*}

Therefore,  Corollary \ref{cor01} provides a refinement of the inequality $(\ref{in33})$.\\

     $(ii)$
   According to  \cite[Th. 3.1]{hai}, for every $T\in\mathscr{B}_A(\mathcal{H}) $, the following inequality holds:
     \begin{eqnarray}
         w^{4}_A\left(T\right)\le \frac{3+2\mu}{8(1+\mu)}\Big\|T^{\#_A}T+TT^{\#_A}\Big\|w_A\big(T^2\big)+\frac{1+2\mu}{16(1+\mu)}\Big\|T^{\#_A}T+TT^{\#_A}\Big\|^{2}. \label{qqww1}
    \end{eqnarray} Since $c^2_{a}\bigg(x^{\#_a}x-xx^{\#_a}\bigg)\ge 0$, the estimate in Corollary \ref{cor01} involves additional subtractive terms, which further decrease the right-hand side.
Hence, by taking $\mathfrak{A}=\mathscr{B}(\mathcal{H})$, it follows that  Corollary \ref{cor01} gives a sharper estimate than  (\ref{qqww1}).

\end{remark}
\begin{example}
It was shown in \cite{Alizamani} that for any  $x\in \mathfrak{A}$ and $\mu\ge0$,
    \begin{align}
        v^{4}(x)\le \frac{1}{4(1+\mu)}v(x^2)\|x^*x+xx^*\|+\frac{1+2\mu}{4(1+\mu)}\|(x^*x)^2+(xx^*)^2\|\label{Alizamaniexample}
    \end{align} Consider $\mathfrak{A}=\mathscr{M}_2(\mathbb{C)}$,  the algebra of all $2\times 2$ complex matrices, and take $a=\textbf{1}_{\mathfrak{A}}$.  Now, for the matrix $x=\begin{bmatrix}
        0 & 1\\
        2 & 0
    \end{bmatrix}$, the  inequality (\ref{Alizamaniexample}) yields
     $v^4(x)\le \frac{27+34\mu}{4(1+\mu)}$. On the other hand, Corollary (\ref{cor01}) 
 provides the estimate  $v^4(x)\le \frac{64+73\mu}{16(1+\mu)}$.  Since $\frac{64+73\mu}{16(1+\mu)}< \frac{27+34\mu}{4(1+\mu)}$ for all $\mu\ge 0$, it follows that Corollary (\ref{cor01}) gives a sharper  upper bound compared to inequality (\ref{Alizamaniexample}).\end{example}

\begin{theorem}\label{0stproduct}
Let $x_j$, $y_j\in \mathfrak{A}_a$, $j=1,2,\cdots, k$ and $\mu\ge 0$.    If $\textbf{P}=(p_1,p_2,\cdots,p_k)\in \mathcal{P}_k$ is a probability distribution, then
\begin{align*}
   & v^{4}_a\left(\sum_{j=1}^k p_jx^{\#_a}_jy_j\right)\\\le&\frac{1+2\mu}{16(1+\mu)} \bigg(\sum_{j=1}^{k}p_j\Big\|(x^{\#_a}_jx_j)^2+(y^{\#_a}_jy_j)^2\Big\|_a\bigg)^2-\frac{1}{4}c_{a}^2\bigg(\sum_{j=1}^{k}p_j\big((x^{\#_a}_jx_j)^2-(y^{\#_a}_jy_j)^2\big)\bigg)\nonumber\\&+\frac{3+2\mu}{8(1+\mu)} v_a^r\Big(\sum_{j=1}^{k}p_jy^{\#_a}_jy_jx^{\#_a}_jx_j\Big)\bigg(\bigg(\sum_{j=1}^{k}p_j\Big\|(x^{\#_a}_jx_j)^2+(y^{\#_a}_jy_j)^2\Big\|_a\bigg)^2\\
&-c_{a}^2\bigg(\sum_{j=1}^{k}p_j\big((x^{\#_a}_jx_j)^2-(y^{\#_a}_jy_j)^2\big)\bigg)\bigg)^{\frac{1}{2}} .\end{align*}

\end{theorem}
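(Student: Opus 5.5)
The plan is to reduce $v_a^4$ of $\sum_j p_jx_j^{\#_a}y_j$ to the quantities $\varphi\big(a(x_j^{\#_a}x_j)^2\big)$ and $\varphi\big(a(y_j^{\#_a}y_j)^2\big)$. This takes two applications of the Cauchy--Schwarz type lemmas, followed by the identity $4ab=(a+b)^2-(a-b)^2$ as in the proof of Theorem \ref{bbvvm}. The treatment of the exponent $r$ and of the $-\tfrac14$ coefficient is left open and discussed in the last paragraph.

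Fix $\varphi\in\mathcal S_a(\mathfrak A)$. Since $ax_j^{\#_a}=x_j^*a$, we have $\varphi\big(a\sum_j p_jx_j^{\#_a}y_j\big)=\sum_j p_j\varphi(x_j^*ay_j)$.

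\emph{Step 1.} Apply Lemma \ref{newcauchy} to $S:=\sum_j p_j\varphi(x_j^*ay_j)$ with
\[
A_j=\varphi(x_j^*ax_j)=\varphi(ax_j^{\#_a}x_j),\qquad B_j=\varphi(ay_j^{\#_a}y_j).
\]
Bound the sum $\sum_j p_j\sqrt{A_jB_j}$ by $\sum_j p_j\frac{A_j+B_j}{2}$, and bound this in turn by $\big(\sum_j p_j\frac{A_j^2+B_j^2}{2}\big)^{1/2}$, using Lemma \ref{young}(i) with $r=2$. Squaring $|S|^2$ then gives $|S|^4$ as a combination of three pieces: $\sum_jp_jA_j\sum_jp_jB_j$, a cross term containing $|S|$, and $\big(\sum_j p_j\varphi\big(a((x_j^{\#_a}x_j)^2+(y_j^{\#_a}y_j)^2)\big)\big)^2$. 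Here we use the $C^*$-Jensen estimate $\varphi(ax^{\#_a}x)^2\le\varphi\big(a(x^{\#_a}x)^2\big)$. This holds for $\varphi\in\mathcal S_a(\mathfrak A)$ by Cauchy--Schwarz, because $a x^{\#_a}x$ is positive, i.e. $x^{\#_a}x$ is $a$-positive.

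\emph{Step 2.} For the product $\sum_jp_jA_j\sum_jp_jB_j$, apply Lemma \ref{zzxx} with $x_j\mapsto x_j^{\#_a}x_j$ and $y_j\mapsto y_j^{\#_a}y_j$; both elements are $a$-self-adjoint. This yields
\[
\tfrac12\Big|\sum_jp_j\varphi\big(ay_j^{\#_a}y_jx_j^{\#_a}x_j\big)\Big|+\tfrac12\Big(\sum_jp_j\varphi\big(a(x_j^{\#_a}x_j)^2\big)\sum_jp_j\varphi\big(a(y_j^{\#_a}y_j)^2\big)\Big)^{1/2}.
\]
The first summand is bounded by $v_a\big(\sum_jp_jy_j^{\#_a}y_jx_j^{\#_a}x_j\big)$; this is the source of the $v_a^r$ factor. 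If $r$ is meant as a free exponent $r\ge1$, it would enter through the second inequality of Lemma \ref{young}, used to pass from an arithmetic mean to a power mean. I would carry $r$ along in exactly that step.

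\emph{Step 3.} In the second summand, rewrite the product with $4ab=(a+b)^2-(a-b)^2$ applied to $a=\sum_jp_j\varphi\big(a(x_j^{\#_a}x_j)^2\big)$ and $b=\sum_jp_j\varphi\big(a(y_j^{\#_a}y_j)^2\big)$. Then bound $(a+b)^2$ above by $\big(\sum_jp_j\|(x_j^{\#_a}x_j)^2+(y_j^{\#_a}y_j)^2\|_a\big)^2$, and $(a-b)^2$ below by $c_a^2\big(\sum_jp_j((x_j^{\#_a}x_j)^2-(y_j^{\#_a}y_j)^2)\big)$. Finally collect coefficients and take the supremum over $\varphi$.

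The main obstacle is the bookkeeping in this last step, i.e. making the subtracted Crawford term come out with coefficient $\tfrac14$ while the norm-squared term has coefficient $\tfrac{1+2\mu}{16(1+\mu)}$. In the combination of Theorem \ref{bbvvm} the difference term naturally appears with weight $\tfrac1{16}$. To reach $\tfrac14$, I would apply the identity $4ab=(a+b)^2-(a-b)^2$ \emph{before} the factor $\tfrac14$ from Lemma \ref{zzxx} is applied, on the pure product term $U$. I would then check whether the $\mu$-dependent cross and mean terms can absorb the remaining $(a+b)^2$ contributions so that the total norm coefficient is $\tfrac{1+2\mu}{16(1+\mu)}$. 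This coefficient check is the step I would verify first; if it fails, the stated inequality needs to be revisited at that point.
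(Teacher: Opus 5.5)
\textbf{There is a genuine gap, and the target itself is misprinted.} Your worry in the last paragraph is justified, and it is not a bookkeeping problem. With the coefficient $\tfrac14$ in front of $c_a^2$ the inequality is false, so no arrangement of the steps can produce it. Take $\mathfrak A=\mathbb C$, $a=1$, $k=1$, $x_1=1$, $y_1=0$. The left side is $v_a^4(0)=0$. On the right, $\|(x_1^{\#_a}x_1)^2+(y_1^{\#_a}y_1)^2\|_a=c_a\big((x_1^{\#_a}x_1)^2-(y_1^{\#_a}y_1)^2\big)=1$ and $v_a(y_1^{\#_a}y_1x_1^{\#_a}x_1)=0$, so the right side equals $\frac{1+2\mu}{16(1+\mu)}-\frac14<0$.

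The bound that actually holds, and that the paper's own argument produces, has $\frac1{16}$ in that place, exactly as in Theorem \ref{bbvvm}. Likewise $v_a^r$ must be read as $v_a$ (cf.\ Corollary \ref{cor2}). Indeed, with $x_1=y_1=s>0$ in $\mathbb C$, the right side minus the left side is $\frac{3+2\mu}{4(1+\mu)}(s^{4r+4}-s^8)$, which is negative for suitable $s$ whenever $r\neq1$. So the power-mean step from Lemma \ref{young} has no role.

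Separately, your Step 1 puts the $\mu$-refinement in the wrong place. Applying Lemma \ref{newcauchy} to $S=\sum_jp_j\varphi(x_j^*ay_j)$ itself leaves, after squaring, a cross term in $|S|$. That term is controlled only by $v_a\big(\sum_jp_jx_j^{\#_a}y_j\big)$, the quantity being estimated, and never by $v_a\big(\sum_jp_jy_j^{\#_a}y_jx_j^{\#_a}x_j\big)$.

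Your Steps 2 and 3 are the right ingredients; the working order is as follows. Set $X_j=x_j^{\#_a}x_j$ and $Y_j=y_j^{\#_a}y_j$.
\begin{enumerate}
\item Use only the weighted Cauchy--Schwarz inequality to get $|S|^4\le\big(\sum_jp_j\varphi(aX_j)\sum_jp_j\varphi(aY_j)\big)^2$.
\item Apply Lemma \ref{zzxx} to the $a$-self-adjoint elements $X_j,Y_j$, then square. This gives $\frac14U+\frac12V+\frac14|T|^2$, where $T=\sum_jp_j\varphi(aY_jX_j)$, $U=\sum_jp_j\varphi(aX_j^2)\sum_jp_j\varphi(aY_j^2)$ and $V=U^{1/2}|T|$.
\item Only now apply Lemma \ref{newcauchy}, to $|T|^2$; this is Corollary \ref{klj1}.
\item Use $4U\le N^2-C^2$ from the identity $4ab=(a+b)^2-(a-b)^2$, where $N=\sum_jp_j\|X_j^2+Y_j^2\|_a$ and $C=c_a\big(\sum_jp_j(X_j^2-Y_j^2)\big)$.
\end{enumerate}
Together these give
\[
|S|^4\le\tfrac1{16}(N^2-C^2)+\tfrac{\mu}{16(1+\mu)}N^2+\tfrac{3+2\mu}{8(1+\mu)}\,v_a\Big(\sum_jp_jY_jX_j\Big)\,(N^2-C^2)^{1/2},
\]
which is the corrected statement. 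The weight on $C^2$ is the product of the $\frac14$ from squaring Lemma \ref{zzxx} and the $\frac14$ from the identity. So applying the identity ``before'' that factor, as you propose, cannot turn it into $\frac14$.
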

\begin{proof}
Let $x_j, y_j\in \mathfrak{A}_a$, $j=1,2,\cdots, k$ and $\varphi\in \mathcal{S}_a(\mathfrak{A})$. By  Lemma \ref{vv2}, we obtain
     \begin{align*}
        \Big|\sum_{j=1}^kp_j\varphi(x^*_jay_j)\Big|^{4}
         &\le\Big(\sum_{j=1}^kp_j\big|\varphi(x^*_jay_j)\big|\Big)^{4}\nonumber\\
&\le\Big(\sum_{j=1}^kp_j\sqrt{\varphi(x^*_jax_j)}\sqrt{\varphi(y^*_jay_j)}\Big)^{4}\nonumber\\
        &\le \Big(\sum_{j=1}^kp_j\varphi(x^*_jax_j)\sum_{j=1}^kp_j\varphi(y^*_jay_j)\Big)^{2}\nonumber\\
&=\bigg(\Big|\sum_{j=1}^kp_j\varphi(ax^{\#_a}_jx_j)\Big|\Big|\sum_{j=1}^kp_j\varphi(ay^{\#_a}_jy_j)\Big|\bigg)^{2}.\end{align*}
Since, 
$x^{\#_a}_jx_j$ and $y^{\#_a}_jy_j$ belong to $\mathfrak{A}_a$  for each $j=1,2,\cdots,k$, so applying the Corollary \ref{klj1} we have
\begin{align}&  \Big|\sum_{j=1}^kp_j\varphi(x^*_jay_j)\Big|^{4}\nonumber\\
\le&\bigg(\Big|\sum_{j=1}^kp_j\varphi(ax^{\#_a}_jx_j)\Big|\Big|\sum_{j=1}^kp_j\varphi(ay^{\#_a}_jy_j)\Big|\bigg)^{2}\nonumber\\\le&\frac{1}{4} \bigg(\sum_{j=1}^{k}p_j\varphi\big(a(x^{\#_a}_jx_j)^2\big)\sum_{j=1}^{k}p_j\varphi\big(a(y^{\#_a}_jy_j)^2\big)\bigg)\nonumber\\&\nonumber+\frac{3+2\mu}{4(1+\mu)} \bigg(\sum_{j=1}^{k}p_j\varphi\big(a(x^{\#_a}_jx_j)^2\big)\sum_{j=1}^{k}p_j\varphi\big(a(y^{\#_a}_jy_j)^2\big)\bigg)^{\frac{1}{2}}\Big|\sum_{j=1}^{k}p_j\varphi(ay^{\#_a}_jy_jx^{\#_a}_jx_j)\Big|\\&+\frac{\mu}{4(1+\mu)}\bigg(\sum_{j=1}^kp_j\frac{\varphi\big(a\big((x^{\#_a}_jx_j)^2+(y^{\#_a}_jy_j)^2\big)\big)}{2}\bigg)^{2}.\label{A}
\end{align}
Next, it follows from the identity
$4ab=(a+b)^2-(a-b)^2$ for all $a,b\in\mathbb{R}$, that
\begin{align}
&\sum_{j=1}^{k}p_j\varphi\big(a(x^{\#_a}_jx_j)^2\big)\sum_{j=1}^{k}p_j\varphi\big(a(y^{\#_a}_jy_j)^2\big)\nonumber\\& =\frac{1}{4}\bigg(\sum_{j=1}^{k}p_j\varphi\Big(a\big((x^{\#_a}_jx_j)^2+(y^{\#_a}_jy_j)^2\big)\Big)\bigg)^2- \frac{1}{4}\bigg(\sum_{j=1}^{k}p_j\varphi\Big(a\big((x^{\#_a}_jx_j)^2-(y^{\#_a}_jy_j)^2\big)\Big)\bigg)^2\nonumber\\
& \le \frac{1}{4}\bigg(\sum_{j=1}^{k}p_j\Big\|(x^{\#_a}_jx_j)^2+(y^{\#_a}_jy_j)^2\Big\|_a\bigg)^2-\frac{1}{4}c_{a}^2\bigg(\sum_{j=1}^{k}p_j\big((x^{\#_a}_jx_j)^2-(y^{\#_a}_jy_j)^2\big)\bigg)
\label{AB}.  \end{align}
Now, from the inequalities (\ref{A}) and (\ref{AB}), it implies that
\begin{align*}&\Big|\varphi\Big(a\Big(\sum_{j=1}^kp_jx^{\#_a}_jy_j\Big)\Big)\Big|^{4}\\
=&\Big|\sum_{j=1}^kp_j\varphi(x^*_jay_j)\Big|^{4}\\ \le&\frac{1+2\mu}{16(1+\mu)} \bigg(\sum_{j=1}^{k}p_j\Big\|(x^{\#_a}_jx_j)^2+(y^{\#_a}_jy_j)^2\Big\|_a\bigg)^2-\frac{1}{4}c_{a}^2\bigg(\sum_{j=1}^{k}p_j\big((x^{\#_a}_jx_j)^2-(y^{\#_a}_jy_j)^2\big)\bigg)\nonumber\\&+\frac{3+2\mu}{8(1+\mu)} v_a^r\Big(\sum_{j=1}^{k}p_jy^{\#_a}_jy_jx^{\#_a}_jx_j\Big)\bigg(\bigg(\sum_{j=1}^{k}p_j\Big\|(x^{\#_a}_jx_j)^2+(y^{\#_a}_jy_j)^2\Big\|_a\bigg)^2\\
&-c_{a}^2\bigg(\sum_{j=1}^{k}p_j\big((x^{\#_a}_jx_j)^2-(y^{\#_a}_jy_j)^2\big)\bigg)\bigg)^{\frac{1}{2}} .
\end{align*} Finally, 
by taking the supremum over all $\varphi\in \mathcal{S}_a(\mathfrak{A})$, we arrive at the required result.

\end{proof}
Taking $k = 1$  in Theorem \ref{0stproduct}, yields the following corollary.
\begin{cor}\label{cor2} 
Let $\mathfrak{A} $ be a unital ${C}^*$-algebra and $\mu\ge 0$.  If $x,y\in \mathfrak{A}_a$, then 
\begin{align*}
v^{4}_a\left(x^{\#_a}y\right)&\le\frac{1+2\mu}{16(1+\mu)}\|(x^{\#_a}x)^2+(y^{\#_a}y)^2\|_a^2-\frac{1}{4}c_{a}^2\big((x^{\#_a}x)^2-(y^{\#_a}y)^2\big)\\&+\frac{3+2\mu}{8(1+\mu)}v_a\big( y^{\#_a}yx^{\#_a}x\big)\Bigg(\|(x^{\#_a}x)^2+(y^{\#_a}y)^2\|_a^2-c_{a}^2\big((x^{\#_a}x)^2-(y^{\#_a}y)^2\big)\Bigg)^\frac{1}{2}.
\end{align*}

\end{cor}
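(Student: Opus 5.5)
The plan is to specialize Theorem \ref{0stproduct} to $k=1$, exactly as Corollary \ref{cor01} was obtained from Theorem \ref{bbvvm}. With $k=1$ we have $\textbf{P}=(1)$, so $p_1=1$ and every sum collapses to a single term. I write $x=x_1$ and $y=y_1$, both in $\mathfrak{A}_a$.

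Substituting, the left-hand side of Theorem \ref{0stproduct} becomes $v_a^4(x^{\#_a}y)$. The three terms on the right become the following.

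\begin{itemize}
\item The first term is $\frac{1+2\mu}{16(1+\mu)}\|(x^{\#_a}x)^2+(y^{\#_a}y)^2\|_a^2$.
\item The Crawford term is $-\frac14 c_a^2\big((x^{\#_a}x)^2-(y^{\#_a}y)^2\big)$.
\item The mixed term is $\frac{3+2\mu}{8(1+\mu)}v_a(y^{\#_a}yx^{\#_a}x)$ multiplied by the square root of $\|\cdot\|_a^2-c_a^2(\cdot)$.
\end{itemize}

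These agree with the three terms of Corollary \ref{cor2}.

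The one point needing care is the stray exponent in the statement of Theorem \ref{0stproduct}, where $v_a^r(\cdots)$ appears. Its proof produces $\sup_\varphi|\varphi(a y^{\#_a}yx^{\#_a}x)|=v_a(y^{\#_a}yx^{\#_a}x)$, so the exponent is really $1$. To be safe I would re-run the last step of that proof directly at $k=1$ rather than cite the statement verbatim. Fix $\varphi\in\mathcal S_a(\mathfrak A)$.

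First, the Cauchy–Schwarz inequality together with $x^*a=ax^{\#_a}$ gives
$$|\varphi(ax^{\#_a}y)|^4=|\varphi(x^*ay)|^4\le\big(\varphi(ax^{\#_a}x)\varphi(ay^{\#_a}y)\big)^2.$$

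Next, I apply Corollary \ref{klj1} with $k=1$ to the $a$-self-adjoint elements $x^{\#_a}x$ and $y^{\#_a}y$. These lie in $\mathfrak A_a$ and may be chosen as their own $a$-adjoints.

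Then I split the product $\varphi(a(x^{\#_a}x)^2)\varphi(a(y^{\#_a}y)^2)$ using $4st=(s+t)^2-(s-t)^2$. I bound the $(s+t)$ part by $\|\cdot\|_a$ and the $(s-t)$ part from below by $c_a$, exactly as in (\ref{AB}).

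Finally, I take the supremum over $\varphi\in\mathcal S_a(\mathfrak A)$.

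I expect no real obstacle here. The only things to check are the bookkeeping of constants and the fact that each bound on the right holds uniformly in $\varphi$, which is what lets the supremum pass through.
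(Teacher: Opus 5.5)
Your approach is the paper's own: set $k=1$ in Theorem~\ref{0stproduct}. However, the step you treat as mere bookkeeping of constants is exactly where it breaks, because the Crawford terms do not agree.

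Carry out the re-run you describe. Put $X=x^{\#_a}x$ and $Y=y^{\#_a}y$ (both $a$-self-adjoint), and $s=\varphi(aX^2)$, $t=\varphi(aY^2)$. Then Cauchy--Schwarz and Corollary~\ref{klj1} at $k=1$ give
\begin{align*}
|\varphi(ax^{\#_a}y)|^4\le \frac14\,st+\frac{3+2\mu}{4(1+\mu)}\sqrt{st}\,|\varphi(aYX)|+\frac{\mu}{4(1+\mu)}\Big(\frac{s+t}{2}\Big)^2,
\end{align*}
with $st\le\frac14\|X^2+Y^2\|_a^2-\frac14c_a^2(X^2-Y^2)$. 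Since $st$ carries the factor $\frac14$, the Crawford term you actually obtain is $-\frac1{16}c_a^2\big((x^{\#_a}x)^2-(y^{\#_a}y)^2\big)$, exactly as in Theorem~\ref{bbvvm}. The $\mu$-term contributes only $\frac{\mu}{16(1+\mu)}\|X^2+Y^2\|_a^2$, and the remaining terms come out as stated. The paper makes the same slip when it passes from (\ref{A}) and (\ref{AB}) to the final display of Theorem~\ref{0stproduct}, and Corollary~\ref{cor2} inherits it.

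No argument can recover the coefficient $\frac14$, because the statement as written is false. In any unital ${C}^*$-algebra take $a=x=\textbf{1}_{\mathfrak{A}}$ and $y=0$. Then $v_a^4(x^{\#_a}y)=v^4(0)=0$. The mixed term vanishes, since $v(y^{\#_a}yx^{\#_a}x)=v(0)=0$. The right-hand side therefore equals $\frac{1+2\mu}{16(1+\mu)}-\frac14<\frac18-\frac14<0$.

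What your argument does establish is the inequality with $-\frac1{16}c_a^2\big((x^{\#_a}x)^2-(y^{\#_a}y)^2\big)$ in place of $-\frac14c_a^2(\cdots)$. You were right that the exponent on $v_a$ should be read as $1$. For the corrected version, the same example gives right-hand side $\frac{\mu}{16(1+\mu)}\ge 0$, as it must. The constants needed to be checked against the proof of Theorem~\ref{0stproduct}, not against its displayed statement.
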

\begin{remark}
  Since $v_a\Big(y^{\#_a}yx^{\#_a}x\Big)=v_a\Big(x^{\#_a}xy^{\#_a}y\Big)$ and $ c_{a}^2\big((x^{\#_a}x)^2-(y^{\#_a}y)^2\big)\ge 0$, we observe that, in the special case where
 $\mathfrak{A}=\mathscr{B}(\mathcal{H})$, Corollary \ref{cor2} improves  the upper bound proved in \cite[Theorem 3.3]{hai} and it provides an improvement of \cite[Th. 3.8]{hq02}.\\


\end{remark}
\begin{example}
   In \cite{pintu}, it was shown that for elements $x,y,z\in \mathfrak{A}$ and $r\in \mathbb{N}$, 
    \begin{align*}
        v^{r}\Big(\sum_{j=1}^k x_j^*z_jy_j\Big)\le \frac{n^{r-1}}{\sqrt{2}}v\Big(\sum_{j=1}^k\big((y_j^*|z_j|y_j)^r+i(x^*_j|z_j^*|x_j)^r\big)\Big).
    \end{align*}  Taking $k=1$,  $r=4$ and $z=\mathbf{1}_{\mathfrak{A}}$, this reduces to
     \begin{align}
        v^{4}\big(x^*y\big)\le \frac{1}{\sqrt{2}}v\Big((y^*y)^4+i(x^*x)^4\Big).\label{pintuexample}
    \end{align}  
      Now, let $\mathfrak{A}=\mathscr{M}_2(\mathbb{C)}$ and choose $a=\textbf{1}_{\mathfrak{A}}$. Consider the matrices $x=\begin{pmatrix}
           2 & 0\\ 0 & 1
       \end{pmatrix}$ and  $y=\begin{pmatrix}
           1 & 0\\ 0 & 0
       \end{pmatrix}.$  For these choices, inequality \eqref{pintuexample} yields $v^{4}(x^*y)\le \sqrt{32768}$. On the other hand, Corollary \ref{cor2} provides the estimate \[v^{4}(x^*y)\le \frac{289(1+2\mu)}{16(1+\mu)}-\frac{1}{4}+\frac{3\sqrt{2}(3+2\mu)}{(1+\mu)},\] which depends on a positive parameter $\mu$.  For any choice of $\mu$, the value of this above expression does not exceed $44.5$. Therefore, Corollary \ref{cor2} yields a significantly sharper upper bound than inequality \eqref{pintuexample}.\end{example}

\begin{theorem}\label{lamma}  Let $x_j\in \mathfrak{A}_a$, $j=1,2,\cdots, k$ and $\mu\ge 0$. If $\textbf{P}=(p_1,p_2,\cdots,p_k)\in \mathcal{P}_k$ is a probability distribution, then 
 \begin{align*}
      v^{4}_a\bigg(\sum_{j=1}^k p_jx_j\bigg) \le &\frac{1}{2(1+\mu)} v_a^2\Big(\sum_{j=1}^{k}p_jx_j^2 \Big)+\frac{1+2\mu}{8(1+\mu)}\bigg(\sum_{j=1}^{k}p_j\|x^{\#_a}_jx_j+x_jx^{\#_a}_j\|_a\bigg)^2\\&-\frac{1}{8}c_a^2\bigg(\sum_{j=1}^{k}p_j(x^{\#_a}_jx_j-x_jx^{\#_a}_j)\bigg).
\end{align*} 
\end{theorem}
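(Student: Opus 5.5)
We follow the template of Theorem \ref{bbvvm}, replacing Corollary \ref{klj1} by Corollary \ref{2ndcor}. Fix $\varphi\in\mathcal{S}_a(\mathfrak{A})$ and apply Corollary \ref{2ndcor} with $y_j=x_j$ for every $j$. Since $\varphi(a\sum_j p_jx_j)=\sum_j p_j\varphi(ax_j)$, this gives
\begin{align*}
\Big|\sum_{j=1}^k p_j\varphi(ax_j)\Big|^4\le&\frac{1}{2(1+\mu)}\Big|\sum_{j=1}^k p_j\varphi(ax_j^2)\Big|^2+\frac{\mu}{2(1+\mu)}\bigg(\sum_{j=1}^k p_j\frac{\varphi\big(a(x_j^{\#_a}x_j+x_jx_j^{\#_a})\big)}{2}\bigg)^2\\
&+\frac12\sum_{j=1}^k p_j\varphi(ax_j^{\#_a}x_j)\sum_{j=1}^k p_j\varphi(ax_jx_j^{\#_a}).
\end{align*}

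We then bound the three terms separately.
\begin{enumerate}[(i)]
\item The first term is at most $\frac{1}{2(1+\mu)}v_a^2\big(\sum_j p_jx_j^2\big)$, because $\varphi(a\sum_jp_jx_j^2)\in V_a(\sum_jp_jx_j^2)$.
\item For the second term, note that $x_j^{\#_a}x_j+x_jx_j^{\#_a}$ is $a$-self-adjoint, hence its $a$-numerical radius equals its $a$-seminorm by (\ref{in2}). Therefore $0\le\varphi\big(a(x_j^{\#_a}x_j+x_jx_j^{\#_a})\big)\le\|x_j^{\#_a}x_j+x_jx_j^{\#_a}\|_a$, and the term is at most $\frac{\mu}{8(1+\mu)}\big(\sum_jp_j\|x_j^{\#_a}x_j+x_jx_j^{\#_a}\|_a\big)^2$.
\item For the third term, use the identity $4ab=(a+b)^2-(a-b)^2$ exactly as in (\ref{q}). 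This bounds the product by
\[
\frac14\Big(\sum_jp_j\|x_j^{\#_a}x_j+x_jx_j^{\#_a}\|_a\Big)^2-\frac14c_a^2\Big(\sum_jp_j(x_j^{\#_a}x_j-x_jx_j^{\#_a})\Big),
\]
where the subtracted part uses that $\varphi\big(a\sum_jp_j(x_j^{\#_a}x_j-x_jx_j^{\#_a})\big)$ lies in the corresponding $a$-numerical range. Hence its modulus is at least the $a$-Crawford number, and since this quantity is real, its square is at least $c_a^2$. After the factor $\frac12$, this contributes $\frac18(\cdot)^2-\frac18c_a^2(\cdot)$.
\end{enumerate}

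Adding the coefficients of $(\sum_jp_j\|\cdot\|_a)^2$ gives $\frac{\mu}{8(1+\mu)}+\frac18=\frac{1+2\mu}{8(1+\mu)}$, which matches the statement. Taking the supremum over $\varphi\in\mathcal{S}_a(\mathfrak{A})$ on the left gives $v_a^4(\sum_jp_jx_j)$ and completes the proof.

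The main point needing care is the sign and positivity bookkeeping in steps (ii) and (iii). First, $\varphi(ax_j^{\#_a}x_j)\ge0$ and $\varphi(ax_jx_j^{\#_a})\ge0$ because $ax_j^{\#_a}x_j=x_j^*ax_j$ and, similarly, $ax_jx_j^{\#_a}=(x_j^{\#_a})^*ax_j^{\#_a}$ (using $ax^{\#_a}=x^*a$ and its adjoint $(x^{\#_a})^*a=ax$). Second, the quantity inside the squared difference is real, which follows from $a$-self-adjointness of $x_j^{\#_a}x_j-x_jx_j^{\#_a}$. Both facts were already used implicitly in the proof of Theorem \ref{bbvvm}, so they can be cited from there.
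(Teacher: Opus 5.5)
Your proposal is correct and follows the paper's proof essentially step for step. Like the paper, you apply Corollary \ref{2ndcor} with $y_j=x_j$, bound the product term via $4ab=(a+b)^2-(a-b)^2$ together with the $a$-Crawford number, combine coefficients to get $\frac{1+2\mu}{8(1+\mu)}$, and take the supremum over $\mathcal{S}_a(\mathfrak{A})$. Your explicit positivity check that $ax_j^{\#_a}x_j=x_j^*ax_j$ and $ax_jx_j^{\#_a}=(x_j^{\#_a})^*ax_j^{\#_a}$ fills in a detail the paper leaves implicit.
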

\begin{proof} 
 Let $x_j\in \mathfrak{A}_a$, $j=1,2,\cdots, k$ and $\varphi\in \mathcal{S}_a(\mathfrak{A})$.
Then  from Corollary \ref{2ndcor} we have,
\begin{align}\label{Bl}
  & \nonumber \Big|\sum_{j=1}^k p_j\varphi(ax_j)\Big|^{4}
\\&=\bigg(\Big|\sum_{j=1}^{k}p_j\varphi(ax_j)\Big|\Big|\sum_{j=1}^{k}p_j\varphi(ax_j)\Big|\bigg)^{2}\nonumber\\\nonumber
        &\le \frac{1}{2(1+\mu)} \Big|\sum_{j=1}^{k}p_j\varphi(ax_j^2) \Big|^{2}+ \frac{\mu}{8(1+\mu)}\bigg(\sum_{j=1}^{k}p_j\varphi\big(a(x^{\#_a}_jx_j+x_jx^{\#_a}_j)\big)\bigg)^{2}\\&+  \frac{1}{2} \Big(\sum_{j=1}^kp_j\varphi(ax^{\#_a}_jx_j)\sum_{j=1}^kp_j  \varphi(ax_jx^{\#_a}_j)\Big).
 \end{align}
Now, using the identity
$4ab=(a+b)^2-(a-b)^2$ for all $a,b\in\mathbb{R}$, it follows that
\begin{align}
&\sum_{j=1}^kp_j\varphi(ax^{\#_a}_jx_j)\sum_{j=1}^kp_j  \varphi(ax_jx^{\#_a}_j)\nonumber\\=&\frac{1}{4}\bigg(\sum_{j=1}^{k}p_j\varphi\big(a(x^{\#_a}_jx_j+x_jx^{\#_a}_j)\big)\bigg)^2-\frac{1}{4}\bigg(\sum_{j=1}^{k}p_j\varphi\big(a(x^{\#_a}_jx_j-x_jx^{\#_a}_j)\big)\bigg)^2\nonumber\\
\le &\frac{1}{4}\bigg(\sum_{j=1}^{k}p_j\|x^{\#_a}_jx_j+x_jx^{\#_a}_j\|_a\bigg)^2-\frac{1}{4}c_a^2\bigg(\sum_{j=1}^{k}p_j(x^{\#_a}_jx_j-x_jx^{\#_a}_j)\bigg).\label{Bl2}
\end{align}
Now, from the inequalities  \eqref{Bl} and \eqref{Bl2}, it follows that
 \begin{align*}&\Big|\varphi\Big(a\Big(\sum_{j=1}^k p_jx_j\Big)\Big)\Big|^{4}\\
=&\Big|\sum_{j=1}^k p_j\varphi(ax_j)\Big|^{4}\\\le& \frac{1}{2(1+\mu)} v_a^2\Big(\sum_{j=1}^{k}p_jx_j^2 \Big)+\frac{1+2\mu}{8(1+\mu)}\bigg(\sum_{j=1}^{k}p_j\|x^{\#_a}_jx_j+x_jx^{\#_a}_j\|_a\bigg)^2\\&-\frac{1}{8}c_a^2\bigg(\sum_{j=1}^{k}p_j(x^{\#_a}_jx_j-x_jx^{\#_a}_j)\bigg).
    \end{align*}
By taking the supremum over all $\varphi\in \mathcal{S}_a(\mathfrak{A})$, we arrive at the desired result.

\end{proof} The following result follows by substituting $k = 1$ in Theorem \ref{lamma}.
\begin{cor} \label{cor3} Let $x\in \mathfrak{A}_a$ and $\mu\ge 0$. Then
\begin{align*}
    v_a^{4}\left(x\right) \le \frac{1}{2(1+\mu)}  v_a^2\big(x^2\big)+ \frac{1+2\mu}{8(1+\mu)} \big\|x^{\#_a}x+xx^{\#_a}\big\|_a^2-\frac{1}{8}c_a^2\big(x^{\#_a}x-xx^{\#_a}\big).
\end{align*}

\end{cor}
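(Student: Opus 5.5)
This is the case $k=1$ of Theorem \ref{lamma}, so the plan is to specialize that theorem. Take $k=1$, $\textbf{P}=(1)$ and $x_1=x\in\mathfrak{A}_a$, with $x^{\#_a}$ a fixed $a$-adjoint of $x$. Then every weighted sum in Theorem \ref{lamma} collapses to a single term, and the terms become
\[
v_a^2(x^2),\qquad \|x^{\#_a}x+xx^{\#_a}\|_a^2,\qquad c_a^2\big(x^{\#_a}x-xx^{\#_a}\big).
\]
These carry the same coefficients $\frac{1}{2(1+\mu)}$, $\frac{1+2\mu}{8(1+\mu)}$ and $-\frac18$ as in the theorem, which gives the corollary exactly.

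For a direct argument instead, fix $\varphi\in\mathcal{S}_a(\mathfrak{A})$ and apply Corollary \ref{2ndcor} with $k=1$ and $x_1=y_1=x$. This bounds $|\varphi(ax)|^4$ by
\[
\frac{1}{2(1+\mu)}|\varphi(ax^2)|^2+\frac{\mu}{8(1+\mu)}\varphi\big(a(x^{\#_a}x+xx^{\#_a})\big)^2+\frac12\,\varphi(ax^{\#_a}x)\,\varphi(axx^{\#_a}).
\]
The last product is handled with $4st=(s+t)^2-(s-t)^2$, where $s=\varphi(ax^{\#_a}x)\ge0$ and $t=\varphi(axx^{\#_a})\ge0$.

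\begin{itemize}
\item Bound $\varphi(a(x^{\#_a}x+xx^{\#_a}))$ by $\|x^{\#_a}x+xx^{\#_a}\|_a$. This holds because the element is $a$-self-adjoint, so its $a$-numerical radius equals its $a$-seminorm by (\ref{in2}).
\item Bound $|\varphi(a(x^{\#_a}x-xx^{\#_a}))|$ from below by $c_a(x^{\#_a}x-xx^{\#_a})$, directly from the definition of the $a$-Crawford number.
\end{itemize}

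Collecting terms, the squared-norm coefficient is $\frac{\mu}{8(1+\mu)}+\frac18=\frac{1+2\mu}{8(1+\mu)}$, and the Crawford coefficient is $-\frac18$. Then bound $|\varphi(ax^2)|\le v_a(x^2)$ and take the supremum over $\varphi$.

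The only point needing care is that $s,t$ are real, which lets the identity $4st=(s+t)^2-(s-t)^2$ apply. This holds because $ax^{\#_a}x=x^*ax\ge0$ and $axx^{\#_a}=xax^{\#_a}$ is also positive, since $x^{\#_a}$ can be chosen with $ax^{\#_a}=x^*a$; both facts are already implicitly used in Theorem \ref{lamma}. Hence no real obstacle is expected.
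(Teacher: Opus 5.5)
Your proposal is correct and matches the paper, which obtains this corollary simply by setting $k=1$ in Theorem \ref{lamma}; your direct argument is just that theorem's proof specialized to $k=1$. One small fix: the identity $axx^{\#_a}=xax^{\#_a}$ you cite is false in general (it would require $a$ and $x$ to commute), but $axx^{\#_a}$ is still positive, because taking adjoints in $x^*a=ax^{\#_a}$ gives $ax=(x^{\#_a})^*a$, hence $axx^{\#_a}=(x^{\#_a})^*ax^{\#_a}\ge 0$.
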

\begin{remark} $(i)$
 Since $v_a(x^2)\le v_a^2(x)$ and $c_a^2\big(x^{\#_a}x-xx^{\#_a}\big)\ge 0$, it follows that
 \begin{align*}
     &\frac{1}{2(1+\mu)}v_a^{2}\big(x^2\big)
  +\frac{1+2\mu}{8(1+\mu)}\Big\|x^{\#_a}x+xx^{\#_a}\Big\|_a^2-\frac{1}{8} c_a^2\big(x^{\#_a}x-xx^{\#_a}\big)\\
  \le& \frac{1}{4}\Big\|x^{\#_a}x+xx^{\#_a}\Big\|_a^2.
 \end{align*}
    Therefore, Corollary \ref{cor3} provides a refinement of the inequality $(\ref{in33})$.\\ 
  
  $(ii)$
    In particular, if we take $\mathfrak{A}=\mathscr{B}(\mathcal{H})$ then the Corollary \ref{cor3} refines the upper bound bouned
   obtained in \cite[Theorem 3.2]{hai}, due to the fact that $c_a^2\big(x^{\#_a}x-xx^{\#_a}\big)\ge 0$ and further it  refines the bound in \cite[Corollary 3.6]{hq02}.
   
\end{remark}
\begin{example}
In \cite{Alizamani}, it was shown that if $x\in \mathfrak{A}$ and $\mu\in[0,1]$, then
  \begin{align}
    v^4(x)\le \frac{1+\mu}{4}  \|(xx^*)^2+(x^*x)^2\|+\frac{1-\mu}{2}v^2(x^2).\label{Alizamaniexample2}
  \end{align}
   Let $\mathfrak{A}=\mathscr{M}_2(\mathbb{C)}$ and take $a=\textbf{1}_{\mathfrak{A}}$.  Consider  the matrix $x=\begin{bmatrix}
        0 & 1\\
        z & 0
    \end{bmatrix}$ where $z\in \mathbb{C}$. For this choice of $x$, the right-hand side of inequality \eqref{Alizamaniexample2} becomes to \[\frac{1+\mu}{4}(1+|z|^4)  +\frac{1-\mu}{2}|z|^2\,\,\,(=Q_1). \] On the  other hand,  applying Corollary \ref{cor3} yields the estimate \[\frac{|z|^2}{2(1+\mu)}  + \frac{1+2\mu}{8(1+\mu)} (1+|z|^2)^2-\frac{(|z|^2-1)^2}{8}\,\,\,(=Q_2).\]  A direct comparison shows that $Q_2<Q_1$ for all $\mu\in[0,1]$. Therefore, Corollary \ref{cor3} provides a sharper upper bound than the estimate given in \eqref{Alizamaniexample2}.
\end{example}

\begin{theorem}\label{1stproduct}
Let $x_j$, $y_j\in \mathfrak{A}_a$, $j=1,2,\cdots, k$ and $\mu\ge 0$.  If $\textbf{P}=(p_1,p_2,\cdots,p_k)\in \mathcal{P}_k$ is a probability distribution, then 
\begin{align*}
   &  v^{4}_a\bigg(\sum_{j=1}^k p_jx^{\#_a}_jy_j\bigg) \\\le& \frac{1}{2(1+\mu)} v_a^{2}\big(\sum_{j=1}^{k}p_jy^{\#_a}_jy_jx^{\#_a}_jx_j\big)+\frac{1+2\mu}{8(1+\mu)}\Big(\sum_{j=1}^{k}p_j\Big\|(x^{\#_a}_jx_j)^2+(y^{\#_a}_jy_j)^2\Big\|_a\Big)^{2}\\
&-\frac{1}{8}c_{a}^2\bigg(\sum_{j=1}^{k}p_j\big((x^{\#_a}_jx_j)^2-(y^{\#_a}_jy_j)^2\big)\bigg).
\end{align*}
\end{theorem}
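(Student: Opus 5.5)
The proof will combine the opening of the proof of Theorem \ref{0stproduct} with the bounding pattern used in Theorem \ref{lamma}, with Corollary \ref{2ndcor} playing the role that Corollary \ref{klj1} played before. Fix $\varphi\in\mathcal{S}_a(\mathfrak{A})$. Since $ax^{\#_a}_j=x_j^*a$, we have
\[
\varphi\Big(a\sum_{j}p_jx^{\#_a}_jy_j\Big)=\sum_j p_j\varphi(x_j^*ay_j).
\]
Exactly as in the first chain of Theorem \ref{0stproduct}, I apply the triangle inequality, then the Cauchy--Schwarz inequality for $\varphi$, then Lemma \ref{vv2}. This gives
\[
\Big|\sum_j p_j\varphi(x_j^*ay_j)\Big|^4\le\bigg(\Big|\sum_jp_j\varphi(ax^{\#_a}_jx_j)\Big|\,\Big|\sum_jp_j\varphi(ay^{\#_a}_jy_j)\Big|\bigg)^2 .
\]
Here I use $\varphi(x_j^*ax_j)=\varphi(ax^{\#_a}_jx_j)\ge0$, and similarly for $y_j$.

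Next I apply Corollary \ref{2ndcor} with $X_j=x^{\#_a}_jx_j$ and $Y_j=y^{\#_a}_jy_j$. Both lie in $\mathfrak{A}_a$, since $\mathfrak{A}_a$ is a subalgebra. I also want each of them to be its own $a$-adjoint: $(x^{\#_a}_jx_j)^*a=x_j^*(x^{\#_a}_j)^*a$, and $(x^{\#_a}_j)^*a=ax_j$ follows from taking adjoints in $x_j^*a=ax^{\#_a}_j$. Hence
\[
(x^{\#_a}_jx_j)^*a=x_j^*ax_j=ax^{\#_a}_jx_j .
\]
So I may take $X_j^{\#_a}=X_j$ and $Y_j^{\#_a}=Y_j$. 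With these choices:
\begin{itemize}
\item the term $\varphi(aY_jX_j)$ becomes $\varphi(ay^{\#_a}_jy_jx^{\#_a}_jx_j)$;
\item the term $\varphi(aX_j^{\#_a}X_j)$ becomes $\varphi(a(x^{\#_a}_jx_j)^2)$;
\item the term $\varphi(aY_jY_j^{\#_a})$ becomes $\varphi(a(y^{\#_a}_jy_j)^2)$.
\end{itemize}
The resulting bound on the fourth power has three parts: $\frac{1}{2(1+\mu)}|\sum_jp_j\varphi(ay^{\#_a}_jy_jx^{\#_a}_jx_j)|^2$, plus $\frac{\mu}{8(1+\mu)}\big(\sum_jp_j\varphi(a((x^{\#_a}_jx_j)^2+(y^{\#_a}_jy_j)^2))\big)^2$, plus $\frac12$ times the product $\sum_jp_j\varphi(a(x^{\#_a}_jx_j)^2)\sum_jp_j\varphi(a(y^{\#_a}_jy_j)^2)$.

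For that product I use the identity $4ab=(a+b)^2-(a-b)^2$, exactly as in (\ref{AB}). This bounds it by
\[
\tfrac14\Big(\sum_jp_j\|(x^{\#_a}_jx_j)^2+(y^{\#_a}_jy_j)^2\|_a\Big)^2-\tfrac14 c_a^2\Big(\sum_jp_j\big((x^{\#_a}_jx_j)^2-(y^{\#_a}_jy_j)^2\big)\Big).
\]
This step needs $|\varphi(az)|\le\|z\|_a$ for $\varphi\in\mathcal{S}_a(\mathfrak{A})$, which holds because $|\varphi(az)|\le v_a(z)\le\|z\|_a$. It also needs $|\varphi(az)|\ge c_a(z)$, which is the definition of the Crawford number. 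I bound the $\mu$-term by the same norm quantity squared.

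Collecting the coefficients gives $\frac{\mu}{8(1+\mu)}+\frac18=\frac{1+2\mu}{8(1+\mu)}$ in front of the norm term and $-\frac18 c_a^2$ for the Crawford term. Finally I bound $|\sum_jp_j\varphi(ay^{\#_a}_jy_jx^{\#_a}_jx_j)|\le v_a(\sum_jp_jy^{\#_a}_jy_jx^{\#_a}_jx_j)$ and take the supremum over $\varphi$, which gives the statement. The main point needing care is verifying that $x^{\#_a}_jx_j$ can serve as its own $a$-adjoint, so that Corollary \ref{2ndcor} produces exactly these squared terms; the rest is bookkeeping of constants.
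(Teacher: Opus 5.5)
Your proposal is correct and follows the paper's proof essentially step for step. You use the same triangle/Cauchy--Schwarz/Lemma \ref{vv2} chain to reduce to $\big(|\sum_j p_j\varphi(ax^{\#_a}_jx_j)|\,|\sum_j p_j\varphi(ay^{\#_a}_jy_j)|\big)^2$, then apply Corollary \ref{2ndcor} to $x^{\#_a}_jx_j$ and $y^{\#_a}_jy_j$, and finish with the identity $4ab=(a+b)^2-(a-b)^2$ together with the $\|\cdot\|_a$ and $c_a$ bounds. Your explicit check that $x^{\#_a}_jx_j$ is $a$-self-adjoint, so it may serve as its own $a$-adjoint, justifies a substitution the paper makes without comment.
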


\begin{proof} 
 Let $x_j\in \mathfrak{A}_a$, $j=1,2,\cdots, k$ and $\varphi\in \mathcal{S}_a(\mathfrak{A})$.  By  Lemma \ref{vv2}, we obtain
    \begin{align*}
  \Big|\sum_{j=1}^kp_j\varphi(x^*_jay_j)\Big|^{4}\nonumber
&\le\Big(\sum_{j=1}^kp_j|\varphi(x^*_jay_j)|\Big)^{4}\nonumber\\\nonumber
&\le\Big(\sum_{j=1}^kp_j\sqrt{\varphi(x^*_jax_j)}\sqrt{\varphi(y^*_jay_j)}\Big)^{4}\\\nonumber
        &\le \Big(\sum_{j=1}^kp_j\varphi(x^*_jax_j)\sum_{j=1}^kp_j\varphi(y^*_jay_j)\Big)^{2}\\      \nonumber   &=\bigg(\Big|\sum_{j=1}^kp_j\varphi(ax^{\#_a}_jx_j)\Big|\Big|\sum_{j=1}^kp_j\varphi(ay^{\#_a}_jy_j)\Big|\bigg)^{2}.\end{align*}
Since, 
$x^{\#_a}_jx_j$ and $y^{\#_a}_jy_j$ belong to $\mathfrak{A}_a$  for each $j=1,2,\cdots,k$, so applying the Corollary \ref{2ndcor} we have
        \begin{align}
      & \Big|\sum_{j=1}^kp_j\varphi(x^*_jay_j)\Big|^{4} \nonumber\\ &\le\bigg(\Big|\sum_{j=1}^kp_j\varphi(ax^{\#_a}_jx_j)\Big|\Big|\sum_{j=1}^kp_j\varphi(ay^{\#_a}_jy_j)\Big|\bigg)^{2}\nonumber\\ \le &\frac{1}{2(1+\mu)}\Big|\sum_{j=1}^{k}p_j\varphi(ay^{\#_a}_jy_jx^{\#_a}_jx_j)\Big|^{2}
  +\frac{\mu}{8(1+\mu)} \bigg(\sum_{j=1}^{k}p_j\varphi\big(a\big((x^{\#_a}_jx_j)^2+(y^{\#_a}_jy_j)^2\big)\big)\bigg)^{2}\nonumber\\&+\frac{1}{2}\bigg(\sum_{j=1}^kp_j\varphi\big(a(x^{\#_a}_jx_j)^2\big)\sum_{j=1}^kp_j  \varphi\big(a(y^{\#_a}_jy_j)^2\big)\bigg). \label{C}
    \end{align}
Again, the identity
$4ab=(a+b)^2-(a-b)^2$ for all $a,b\in\mathbb{R}$ yields
\begin{align}
&\sum_{j=1}^kp_j\varphi\big(a(x^{\#_a}_jx_j)^2\big)\sum_{j=1}^kp_j  \varphi\big(a(y^{\#_a}_jy_j)^2\big)\nonumber \\&= \frac{1}{4} \bigg(\sum_{j=1}^{k}p_j\varphi\Big(a\big((x^{\#_a}_jx_j)^2+(y^{\#_a}_jy_j)^2\big)\Big)\bigg)^2-\frac{1}{4} \bigg(\sum_{j=1}^{k}p_j\varphi\Big(a\big((x^{\#_a}_jx_j)^2-(y^{\#_a}_jy_j)^2\big)\Big)\bigg)^2 \nonumber\\&\le \frac{1}{4}\bigg(\sum_{j=1}^{k}p_j\Big\|(x^{\#_a}_jx_j)^2+(y^{\#_a}_jy_j)^2\Big\|_a\bigg)^2-\frac{1}{4} c_{a}^2\bigg(\sum_{j=1}^{k}p_j\big((x^{\#_a}_jx_j)^2-(y^{\#_a}_jy_j)^2\big)\bigg).\label{ca}
\end{align}Now, from the inequalities (\ref{C}) and (\ref{ca}), it follows that
\begin{align*}&\Big|\varphi\Big(a\Big(\sum_{j=1}^kp_jx^{\#_a}_jy_j\Big)\Big)\Big|^{4}\\
&=\Big|\sum_{j=1}^kp_j\varphi(x^*_jay_j)\Big|^{4}\\  \le& \frac{1}{2(1+\mu)} v_a^{2}\big(\sum_{j=1}^{k}p_jy^{\#_a}_jy_jx^{\#_a}_jx_j\big)+\frac{1+2\mu}{8(1+\mu)}\Big(\sum_{j=1}^{k}p_j\Big\|(x^{\#_a}_jx_j)^2+(y^{\#_a}_jy_j)^2\Big\|_a\Big)^{2}\\
&-\frac{1}{8}c_{a}^2\bigg(\sum_{j=1}^{k}p_j\big((x^{\#_a}_jx_j)^2-(y^{\#_a}_jy_j)^2\big)\bigg).
\end{align*}
By taking the supremum over all $\varphi\in \mathcal{S}_a(\mathfrak{A})$, we get the required result.
\end{proof} Choosing  $k = 1$ in Theorem \ref{1stproduct} yields the following corollary.
\begin{cor}\label{cor4}   Let $x,y\in \mathfrak{A}_a$ and $\mu\ge 0$. Then 
   \begin{align*}  v_a^{4}\left(x^{\#_a}y\right)&\le
\frac{1}{2(1+\mu)} v^{2}_a\big(y^{\#_a}yx^{\#_a}x\big)
  +\frac{1+2\mu}{8(1+\mu)}\big\|(x^{\#_a}x)^2+(y^{\#_a}y)^2\big\|_a^{2}\\&-\frac{1}{8}c_{a}^2\big((x^{\#_a}x)^2-(y^{\#_a}y)^2\big).
    \end{align*}
\end{cor}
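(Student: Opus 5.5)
This is the case $k=1$ of Theorem \ref{1stproduct}, so the plan is simply to specialize that theorem. Take $k=1$, $p_1=1$, $x_1=x$, $y_1=y$. Each weighted sum then collapses to a single term:
\[\sum_{j}p_jx^{\#_a}_jy_j=x^{\#_a}y, \qquad \sum_j p_j y^{\#_a}_jy_jx^{\#_a}_jx_j=y^{\#_a}yx^{\#_a}x,\]
\[\Big(\sum_j p_j\|(x^{\#_a}_jx_j)^2+(y^{\#_a}_jy_j)^2\|_a\Big)^2=\|(x^{\#_a}x)^2+(y^{\#_a}y)^2\|_a^2.\]
The $a$-Crawford term becomes $c_a^2\big((x^{\#_a}x)^2-(y^{\#_a}y)^2\big)$. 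Substituting these into Theorem \ref{1stproduct} gives exactly the inequality of Corollary \ref{cor4}.

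To make the argument self-contained, I would also recall the direct route for a fixed $\varphi\in\mathcal{S}_a(\mathfrak{A})$, which is the $k=1$ version of the proof of Theorem \ref{1stproduct}.

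\begin{enumerate}[(1)]
\item Use $ax^{\#_a}=x^*a$ to write $\varphi(ax^{\#_a}y)=\varphi(x^*ay)$.
\item By the Cauchy--Schwarz inequality,
\[|\varphi(x^*ay)|^4\le\big(\varphi(x^*ax)\,\varphi(y^*ay)\big)^2=\big(|\varphi(ax^{\#_a}x)|\,|\varphi(ay^{\#_a}y)|\big)^2 .\]
\item Apply Corollary \ref{2ndcor} with $k=1$ to the elements $x^{\#_a}x$ and $y^{\#_a}y$ of $\mathfrak{A}_a$.
\item Split the resulting product $\varphi(a(x^{\#_a}x)^2)\,\varphi(a(y^{\#_a}y)^2)$ using $4ab=(a+b)^2-(a-b)^2$.
\item Bound the pieces: $|\varphi(a(\cdot))|\le\|\cdot\|_a$ for the sum part, and $|\varphi(a(\cdot))|\ge c_a(\cdot)$ for the difference part.
\item Collect the two $\|\cdot\|_a^2$ contributions, which combine to the coefficient $\frac{1+2\mu}{8(1+\mu)}$, and take the supremum over $\varphi$.
\end{enumerate}

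None of these steps is a real obstacle. The only point to check is that Corollary \ref{2ndcor} may legitimately be applied to $x^{\#_a}x$ and $y^{\#_a}y$. This holds because $\mathfrak{A}_a$ is a subalgebra and $x^{\#_a}x$ is $a$-self-adjoint, so it is its own $a$-adjoint. With that choice, the cross term of Corollary \ref{2ndcor} becomes $\varphi(ay^{\#_a}yx^{\#_a}x)$, whose modulus is at most $v_a(y^{\#_a}yx^{\#_a}x)$.
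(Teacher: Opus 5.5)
Your proposal is correct and matches the paper, which obtains Corollary~\ref{cor4} simply by setting $k=1$ in Theorem~\ref{1stproduct}; your self-contained outline reproduces the $k=1$ case of that theorem's proof step by step. You also note explicitly that $x^{\#_a}x$ is $a$-self-adjoint, so one may take $(x^{\#_a}x)^{\#_a}=x^{\#_a}x$ when invoking Corollary~\ref{2ndcor}; the paper uses this only implicitly.
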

\begin{remark}
$(i)$ by choosing the function
  $\phi(t)=t^2$ for $t\ge 0$ in \cite[Th. 2.30]{smpatra},   we deduce that for any   $x,y,z\in\mathfrak{A}$ with $\|z\|\le 1$,
\begin{eqnarray}
    v^2(x^*zy)\le \frac{\|z\|}{2}\|(x^*x)^2+(y^*y)^2\|.\label{ll02}
\end{eqnarray}

Again, for any $x,y\in \mathfrak{A}$, we have
\begin{eqnarray}
   v(y^*yx^*x)&=&\sup_{\varphi\in\mathcal{S}\mathfrak{(A)}} \big|\varphi(y^*yx^*x)|\nonumber\\
   &\le&  \sup_{f\in\mathcal{S}\mathfrak{(A)}}\sqrt{\varphi\big((y^*y)^2\big)}\sqrt{\varphi\big((x^*x)^2\big)}\nonumber\\
    &\le & \frac{1}{2}\sup_{\varphi\in\mathcal{S}\mathfrak{(A)}}\varphi\big((x^*x)^2+(y^*y)^2\big)\nonumber\\
    &=&\frac{1}{2}\big\|(x^*x)^2+(y^*y)^2\big\|\label{ll}.
\end{eqnarray} 
Therefore,   due to the inclusion of Crawford number in Corollary \ref{cor4} together with the inequality 
 (\ref{ll}), it follows that  for $a=\textbf{1}_{\mathfrak{A}}$ Corollary \ref{cor4} provides a sharper upper bound than (\ref{ll02})  when $z=\textbf{1}_{\mathfrak{A}}$.\\
$(ii)$ Since $v_a\Big(y^{\#_a}yx^{\#_a}x\Big)=v_a\Big(x^{\#_a}xy^{\#_a}y\Big)$ and $ c_{a}^2\big((x^{\#_a}x)^2-(y^{\#_a}y)^2\big)\ge 0$, it follows that, in particular, when we take $\mathfrak{A}=\mathscr{B}(\mathcal{H})$, Corollary \ref{cor4} gives a sharper estimate than the bound obtained in \cite[Th. 3.4]{hai} and it provides an improvement of \cite[Th. 3.8]{hq02}.\\
  
 \end{remark}

\section{\textbf{Some upper bounds of numerical radius}}\label{sec4}
In this section, we derive some numerical radius inequalities of the elements in  $\mathfrak{A}$ using Moore-Penrose inverse.
\begin{theorem}\label{jjhhgg}
   Let $x$ be a regular element of $\mathfrak{A}$ and $\mu\ge 0$. Then
    \begin{eqnarray*}
        v^2(x)&\le&\frac{1}{2(1+\mu)} v\big(x\big) \big\|xx^{\dag}+x^*x\big\|+\frac{\mu}{4(1+\mu)}\big\|xx^{\dag}+x^*x\big\|^2\\
  &\le& \frac{1}{4}\big\|x^*x+xx^{\dag}\big\|^2.
    \end{eqnarray*}

\end{theorem}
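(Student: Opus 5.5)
The plan is to start from Lemma \ref{moore}. Fix $\varphi\in\mathcal{S}(\mathfrak{A})$; Lemma \ref{moore} gives
\[
|\varphi(x)|^2\le \frac{1}{1+\mu}|\varphi(x)|\big(\varphi(xx^{\dag})\varphi(x^*x)\big)^{1/2}+\frac{\mu}{1+\mu}\varphi(xx^{\dag})\varphi(x^*x).
\]
Both $xx^{\dag}$ (a self-adjoint idempotent, hence a projection) and $x^*x$ are positive elements. Therefore $\varphi(xx^{\dag})$ and $\varphi(x^*x)$ are nonnegative reals, and the arithmetic–geometric mean inequality applies:
\[
\big(\varphi(xx^{\dag})\varphi(x^*x)\big)^{1/2}\le \tfrac12\varphi(xx^{\dag}+x^*x).
\]
Squaring gives $\varphi(xx^{\dag})\varphi(x^*x)\le \tfrac14\varphi(xx^{\dag}+x^*x)^2$.

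Next I use that $xx^{\dag}+x^*x$ is positive. For a positive element $p$ and any state $\varphi$ we have $0\le\varphi(p)\le\|p\|$. In addition, $|\varphi(x)|\le v(x)$ by the definition of $v$. Substituting these bounds yields
\[
|\varphi(x)|^2\le \frac{1}{2(1+\mu)}v(x)\|xx^{\dag}+x^*x\|+\frac{\mu}{4(1+\mu)}\|xx^{\dag}+x^*x\|^2 .
\]
Taking the supremum over $\varphi\in\mathcal{S}(\mathfrak{A})$ gives the first inequality.

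For the second inequality, it suffices to show $v(x)\le\tfrac12\|x^*x+xx^{\dag}\|$. Substituting this into the first term gives
\[
\frac{1}{4(1+\mu)}\|\cdot\|^2+\frac{\mu}{4(1+\mu)}\|\cdot\|^2=\frac14\|\cdot\|^2 .
\]
To get the needed bound on $v(x)$, I apply the ordinary Cauchy–Schwarz inequality to $\varphi(x)=\varphi((xx^{\dag})^*x)$. This gives $|\varphi(x)|\le(\varphi(xx^{\dag})\varphi(x^*x))^{1/2}\le\frac12\varphi(xx^{\dag}+x^*x)\le\frac12\|xx^{\dag}+x^*x\|$, and taking the supremum gives the claim. (The same estimate also follows from the $\mu$-inequality, since setting $t=|\varphi(x)|$ and $s$ equal to the geometric mean, the inequality $t^2\le \frac{1}{1+\mu}ts+\frac{\mu}{1+\mu}s^2$ forces $t\le s$.)

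The only point needing care is confirming positivity of $xx^{\dag}$. This holds because $(xx^{\dag})^2=xx^{\dag}xx^{\dag}=xx^{\dag}$ and $(xx^{\dag})^*=xx^{\dag}$, so $xx^{\dag}$ is a projection. Positivity is what justifies both the AM–GM step and the bound $\varphi(\cdot)\le\|\cdot\|$. Everything else is routine substitution.
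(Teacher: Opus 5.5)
Your proposal is correct and follows essentially the same route as the paper. It uses Lemma~\ref{moore}, the AM--GM step $\big(\varphi(xx^{\dag})\varphi(x^*x)\big)^{1/2}\le\frac12\varphi(xx^{\dag}+x^*x)$, the bounds $|\varphi(x)|\le v(x)$ and $\varphi(p)\le\|p\|$ for positive $p$, and then substitutes the Cauchy--Schwarz estimate $v(x)\le\frac12\|xx^{\dag}+x^*x\|$; your checks that $xx^{\dag}$ is a projection and of the final arithmetic just make explicit what the paper leaves implicit.
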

\begin{proof} Let $x$ be a regular element of $\mathfrak{A}$ and $\varphi\in \mathcal{S}(\mathfrak{A})$. Then from Lemma \ref{moore} we have,
    \begin{eqnarray*}
           \Big |\varphi(x)\Big|^{2}
           &\le&\frac{1}{1+\mu} \Big|\varphi(x) \Big|\bigg(\varphi(xx^{\dag})\varphi(x^*x)\bigg)^{\frac{1}{2}}+\frac{\mu}{1+\mu}\bigg((\varphi(xx^{\dag})\varphi(x^*x))^{\frac{1}{2}}\bigg)^2\\
   &\le&\frac{1}{2(1+\mu)} \Big|\varphi(x) \Big|\bigg(\varphi\big(xx^{\dag}+x^*x\big)\bigg)+\frac{\mu}{4(1+\mu)}\Bigg(\varphi\Big(xx^{\dag}+x^*x\Big)\Bigg)^2\\
  &\le&\frac{1}{2(1+\mu)} v\big(x\big) \big\|xx^{\dag}+x^*x\big\|+\frac{\mu}{4(1+\mu)}\big\|xx^{\dag}+x^*x\big\|^2.\\
    \end{eqnarray*} By taking the supremum over $\varphi \in \mathcal{S}(\mathfrak{A})$, we arrive at the first desired inequality.

       For the second inequality we have,
       \begin{eqnarray*}
           v(x)&=& \sup_{\varphi\in\mathcal{S}(\mathfrak{A})}\big|\varphi(x)\big|\\
           &=& \sup_{\varphi\in\mathcal{S}(\mathfrak{A})}\big|\varphi(xx^{\dag}x)\big|\\
           &\le & \sup_{\varphi\in\mathcal{S}(\mathfrak{A})}\Big(\varphi(xx^{\dag})\varphi(x^*x)\Big)^{\frac{1}{2}}\\
               &\le & \frac{1}{2}\sup_{\varphi\in\mathcal{S}(\mathfrak{A})}\varphi\Big(xx^{\dag}+x^*x\Big)\\
               &=&  \frac{1}{2}v\big(xx^{\dag}+x^*x\big)= \frac{1}{2}\big\|xx^{\dag}+x^*x\big\|.
       \end{eqnarray*} This completes the proof.
\end{proof}
\begin{remark} Let $\mathcal{CR(H)}$ be the collection of all bounded linear operator on the Hilbert space $\mathcal{H}$ having closed ranges. In  \cite[Th. 2.2]{com} the following bound is obtained, i.e.,
\begin{eqnarray}
    w(T)\le \frac{1}{2}\big\| T^*T+TT^{\dag}\big\|\label{eerr},
\end{eqnarray} where $T\in \mathcal{CR(H)}$. Now, if we take $\mathfrak{A}=\mathscr{B}(\mathcal{H})$ then for any $T\in \mathcal{CR(H)} $ Theorem \ref{jjhhgg} gives better bound than inequality (\ref{eerr}).
\end{remark}

\begin{theorem}\label{pbh}
    Let $x_1$ and $x_2$ be two regular elements of $\mathfrak{A}$ and $\mu\ge 0$. Then 
    \begin{eqnarray*}
        v^2(x_1+x_2) &\le&\frac{1}{2(1+\mu)}\Bigg(v(x_1) \Big\|x_1x^{\dag}_1+x^*_1x_1\Big\|+v(x_2) \Big\|x_2x^{\dag}_2+x^*_2x_2\Big\|\Bigg)\\
        &&+\frac{\mu}{2(1+\mu)}\Big\|x_1x^{\dag}_1+x_2x^{\dag}_2+(x^*_1x_1)^2+(x^*_2x_2)^2\Big\|+2v(x_1)v(x_2).
    \end{eqnarray*}
\end{theorem}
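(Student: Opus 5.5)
The plan is to work with a fixed state $\varphi\in\mathcal{S}(\mathfrak{A})$, bound $|\varphi(x_1+x_2)|^2$ term by term, and then take the supremum over $\varphi$. By the triangle inequality,
\[
|\varphi(x_1+x_2)|^2\le \big(|\varphi(x_1)|+|\varphi(x_2)|\big)^2=|\varphi(x_1)|^2+|\varphi(x_2)|^2+2|\varphi(x_1)||\varphi(x_2)|.
\]
The cross term is immediately at most $2v(x_1)v(x_2)$. For each square $|\varphi(x_i)|^2$ I will invoke Lemma \ref{moore} for the regular element $x_i$:
\[
|\varphi(x_i)|^2\le \frac{1}{1+\mu}|\varphi(x_i)|\big(\varphi(x_ix_i^{\dag})\varphi(x_i^*x_i)\big)^{\frac12}+\frac{\mu}{1+\mu}\varphi(x_ix_i^{\dag})\varphi(x_i^*x_i).
\]

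For the first summand I proceed exactly as in the proof of Theorem \ref{jjhhgg}. The arithmetic–geometric mean inequality gives
\[
\big(\varphi(x_ix_i^{\dag})\varphi(x_i^*x_i)\big)^{1/2}\le \tfrac12\varphi(x_ix_i^{\dag}+x_i^*x_i).
\]
Here $x_ix_i^{\dag}+x_i^*x_i$ is positive, since $x_ix_i^{\dag}$ is a self-adjoint idempotent, hence a projection. So this quantity is at most $\tfrac12\|x_ix_i^{\dag}+x_i^*x_i\|$. Using also $|\varphi(x_i)|\le v(x_i)$, this produces the term
\[
\tfrac{1}{2(1+\mu)}\,v(x_i)\,\|x_ix_i^{\dag}+x_i^*x_i\|.
\]

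The second summand is where the specific form of the statement, with the squares $(x_i^*x_i)^2$, has to appear, and I expect it to be the main point. Write $p_i=x_ix_i^{\dag}$ and $q_i=x_i^*x_i$. First, $ab\le (a^2+b^2)/2$ gives $\varphi(p_i)\varphi(q_i)\le \tfrac12\big(\varphi(p_i)^2+\varphi(q_i)^2\big)$. Second, because $p_i$ is a projection we have $0\le p_i\le \textbf{1}_{\mathfrak{A}}$, so $0\le\varphi(p_i)\le 1$ and hence $\varphi(p_i)^2\le\varphi(p_i)$. Third, the Cauchy–Schwarz inequality for the state, applied with $\textbf{1}_{\mathfrak{A}}$ and $q_i$, gives $\varphi(q_i)^2\le\varphi(q_i^2)$. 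Combining these,
\[
\varphi(p_i)\varphi(q_i)\le \tfrac12\varphi\big(p_i+q_i^2\big).
\]
Summing over $i=1,2$, the $\mu$-terms are bounded by
\[
\tfrac{\mu}{2(1+\mu)}\,\varphi\big(p_1+p_2+q_1^2+q_2^2\big)\le \tfrac{\mu}{2(1+\mu)}\,\big\|x_1x_1^{\dag}+x_2x_2^{\dag}+(x_1^*x_1)^2+(x_2^*x_2)^2\big\|,
\]
where the last step uses that this element is positive, so its value under any state is at most its norm.

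Adding the three pieces gives the claimed bound for $|\varphi(x_1+x_2)|^2$ with a right-hand side independent of $\varphi$. Taking the supremum over $\varphi\in\mathcal{S}(\mathfrak{A})$ then yields the inequality for $v^2(x_1+x_2)$.
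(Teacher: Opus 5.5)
Your proof is correct and follows the paper's argument essentially step for step. Both use the triangle inequality, then Lemma \ref{moore} for each $x_i$, then AM--GM on both summands, then $\varphi(x_ix_i^{\dag})^2\le\varphi(x_ix_i^{\dag})$ and $\varphi(x_i^*x_i)^2\le\varphi\big((x_i^*x_i)^2\big)$ before passing to norms. The only cosmetic difference is how you justify those last two inequalities: you use $0\le x_ix_i^{\dag}\le \textbf{1}_{\mathfrak{A}}$ and Cauchy--Schwarz with $\textbf{1}_{\mathfrak{A}}$, whereas the paper uses $\varphi(z)^2\le\varphi(z^2)$ for positive $z$ together with idempotence of $x_ix_i^{\dag}$.
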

\begin{proof} Let $x$ be a regular element of $\mathfrak{A}$ and $\varphi\in \mathcal{S}(\mathfrak{A})$. Then  from Lemma \ref{moore} we have
    \begin{eqnarray*}
      &&\big|\varphi(x_1+x_2)\big|^2\\
      &\le& |\varphi(x_1)|^2+|\varphi(x_2)|^2+2|\varphi(x_1)||\varphi(x_2)|\\
      &\le&\frac{1}{1+\mu} \Bigg(\Big|\varphi(x_1) \Big|\Big(\varphi(x_1x^{\dag}_1)\varphi(x^*_1x_1)\Big)^{\frac{1}{2}}+ \Big|\varphi(x_2) \Big|\Big(\varphi(x_2x^{\dag}_2)\varphi(x^*_2x_2)\Big)^{\frac{1}{2}}\Bigg)\\
        &&+\frac{\mu}{1+\mu}\Bigg(\varphi(x_1x^{\dag}_1)\varphi(x^*_1x_1)+\varphi(x_2x^{\dag}_2)\varphi(x^*_2x_2)\Bigg)+2|\varphi(x_1)||\varphi(x_2)|\\
         &\le&\frac{1}{2(1+\mu)}\Bigg( \Big|\varphi(x_1) \Big|\varphi\big(x_1x^{\dag}_1+x^*_1x_1\big)+\Big|\varphi(x_2) \Big|\varphi\big(x_2x^{\dag}_2+x^*_2x_2\big)\Bigg)\\
        &&+\frac{\mu}{2(1+\mu)}\bigg(\big( \varphi(x_1x^{\dag}_1)\big)^2+\big( \varphi(x_2x^{\dag}_2)\big)^2+\big( \varphi(x^*_1x_1)\big)^2+\big( \varphi(x^*_2x_2)\big)^2\bigg)+2|\varphi(x_1)||\varphi(x_2)|\\
         &\le&\frac{1}{2(1+\mu)}\Bigg( \Big|\varphi(x_1) \Big|\varphi\big(x_1x^{\dag}_1+x^*_1x_1\big)+\Big|\varphi(x_2) \Big|\varphi\big(x_2x^{\dag}_2+x^*_2x_2\big)\Bigg)\\
        &&+\frac{\mu}{2(1+\mu)}\varphi\Big( x_1x^{\dag}_1+x_2x^{\dag}_2+(x^*_1x_1)^2+(x^*_2x_2)^2\Big)+2|\varphi(x_1)||\varphi(x_2)|\\
         &\le&\frac{1}{2(1+\mu)}\Bigg(v(x_1) \Big\|x_1x^{\dag}_1+x^*_1x_1\Big\|+v(x_2) \Big\|x_2x^{\dag}_2+x^*_2x_2\Big\|\Bigg)\\
        &&+\frac{\mu}{2(1+\mu)}\Big\|x_1x^{\dag}_1+x_2x^{\dag}_2+(x^*_1x_1)^2+(x^*_2x_2)^2\Big\|+2v(x_1)v(x_2).
    \end{eqnarray*} 
    For any positive element $z \in \mathfrak{A}$ and any $\varphi \in \mathcal{S}(\mathfrak{A})$, we have $\big(\varphi(z)\big)^2 \leq \varphi\big(z^2\big)$ (see \cite{conv}). This property is used to derive the fourth inequality.
Taking the supremum over $\varphi \in \mathcal{S}(\mathfrak{A})$, yields the desired result.
\end{proof}
\begin{remark}
    Let $\mathcal{CR(H)}$ be the collection of all bounded linear operator on the Hilbert space $\mathcal{H}$ having closed ranges. In particular, if we take $\mathfrak{A}=\mathscr{B}(\mathcal{H})$,  $x_2=\textbf{0}$, $x_1=T\in \mathcal{CR(H)} $ and $\mu=\frac{1}{2}$ in Theorem \ref{pbh} then we get
    \begin{eqnarray}
    w^2(T)\le \frac{1}{3}w(T)\Big\||T|^2+TT^{\dag}\Big\|+\frac{1}{6}\Big\||T|^4+TT^{\dag}\Big\|,   \label{exp}
    \end{eqnarray} 
    which is derived  in \cite[Th. 2.9]{pintb}.
\end{remark}
To show that Theorem \ref{pbh} provides an improvement of the inequality (\ref{exp}), we  give the following example.
 \begin{example}
    Let $T=\begin{bmatrix}
        0 & 1\\
        0 & 0
    \end{bmatrix}$. Applying inequality (\ref{exp}) to this operator gives the bound $w^2(T)\le \frac{1}{3}$. In contrast, for $x_2=0$, $x_1=T$ and $\mathfrak{A}=\mathscr{B}(\mathcal{H})$ in Theorem \ref{pbh} we obtain 
    \begin{eqnarray*}
       w^2(T)&\le& \frac{1+2\mu}{4(1+\mu)}.
    \end{eqnarray*}This expression depends on a positive parameter $\mu$  and for any choice of $\mu<\frac{1}{2}$, we obtain $ \frac{2\mu+1}{4(1+\mu)}< \frac{1}{3}$.  This means that Theorem \ref{pbh} provides a significant refinement over the estimate obtained from inequality (\ref{exp}).
\end{example}
\section{Conclusion}
\textit{ 
We establish several significant upper bounds for the numerical radius and the a-numerical radius of elements in a unital ${C}^*$-algebra by deriving inequalities based on positive linear functionals. Furthermore, by incorporating additional structural information through the Moore–Penrose inverse, we obtain sharper estimates of the numerical radius compared to standard existing bounds.
These results also suggest several promising directions for future research.}

\section*{Declarations}	
\textit{Acknowledgements.}
 Mr. Saikat Mahapatra 
would like to thank the UGC, Govt. of India for the financial support (NTA Ref. No. 211610170555) in the form of fellowship. Mr. Arobinda Ghosh would like to thank the UGC, Govt. of India for the financial support (NTA Ref. No. 211610122278) in the form of fellowship.\\
\textit{Author Contributions:} All the authors contributed equally to this manuscript and reviewed it. \\
 \textit{Data Availability :} No datasets were generated or analysed during the current study. \\
\textit{Conflict of interest:} There is no competing interest.\\


\end{document}